\pdfoutput=1 

\documentclass[11pt]{amsart}

\usepackage{amssymb,amsthm,enumitem,colonequals,tikz-cd,microtype,mathtools}
\usepackage[normalem]{ulem} 

\usepackage[osf]{Baskervaldx}
\usepackage[baskervaldx]{newtxmath}
\usepackage[cal=boondoxo]{mathalfa} 

\usepackage[top=3.75cm, bottom=3cm, left=3.5cm, right=3.5cm]{geometry}

\usepackage{xcolor}
\colorlet{darkblue}{blue!55!black}
\colorlet{darkcyan}{cyan!50!black}
\colorlet{darkgreen}{green!60!black}

\PassOptionsToPackage{hyphens}{url}
\usepackage{hyperref}
\hypersetup{
    colorlinks=true,
    linkcolor=darkblue,
    urlcolor=darkcyan,
    citecolor=darkgreen,
}

\def\eqref#1{\textcolor{darkblue}{(\ref{#1})}}

\usepackage[nameinlink]{cleveref} 
\Crefformat{section}{#2\S#1#3}
\Crefmultiformat{section}{#2\S\S#1#3}{ and~#2#1#3}{, #2#1#3}{, and~#2#1#3}

\usepackage[pagewise]{lineno}
\let\oldequation\equation
\let\oldendequation\endequation
\renewenvironment{equation}{\linenomathNonumbers\oldequation}{\oldendequation\endlinenomath}
\expandafter\let\expandafter\oldequationstar\csname equation*\endcsname
\expandafter\let\expandafter\oldendequationstar\csname endequation*\endcsname
\renewenvironment{equation*}{\linenomathNonumbers\oldequationstar}{\oldendequationstar\endlinenomath}
\let\oldalign\align
\let\oldendalign\endalign
\renewenvironment{align}{\linenomathNonumbers\oldalign}{\oldendalign\endlinenomath}
\expandafter\let\expandafter\oldalignstar\csname align*\endcsname
\expandafter\let\expandafter\oldendalignstar\csname endalign*\endcsname
\renewenvironment{align*}{\linenomathNonumbers\oldalignstar}{\oldendalignstar\endlinenomath}

\theoremstyle{plain}
\newtheorem{theorem}{Theorem}[section]
\newtheorem{lemma}[theorem]{Lemma}
\newtheorem{corollary}[theorem]{Corollary}
\newtheorem{proposition}[theorem]{Proposition}

\theoremstyle{definition}
\newtheorem{definition}[theorem]{Definition}
\newtheorem{example}[theorem]{Example}
\newtheorem{remark}[theorem]{Remark}

\newtheorem{reminder}[theorem]{Reminder}

\newtheorem*{ack}{Acknowledgments}

\AddToHook{env/conjecture/begin}{\crefalias{theorem}{conjecture}}
\AddToHook{env/lemma/begin}{\crefalias{theorem}{lemma}}
\AddToHook{env/corollary/begin}{\crefalias{theorem}{corollary}}
\AddToHook{env/proposition/begin}{\crefalias{theorem}{proposition}}
\AddToHook{env/definition/begin}{\crefalias{theorem}{definition}}
\AddToHook{env/remark/begin}{\crefalias{theorem}{remark}}
\AddToHook{env/setup/begin}{\crefalias{theorem}{setup}}
\AddToHook{env/example/begin}{\crefalias{theorem}{example}}
\AddToHook{env/conjecture/begin}{\crefalias{theorem}{conjecture}}
\AddToHook{env/reminder/begin}{\crefalias{theorem}{reminder}}

\numberwithin{equation}{section}
\numberwithin{theorem}{section}

\title[Remarks on diagonal dimension for algebraic stacks]{Remarks on diagonal dimension for algebraic stacks}

\author[P.~Lank]{Pat Lank}
\address{P.~Lank,
Dipartimento di Matematica “F. Enriques”, Universit\`{a} degli Studi di Milano, Via Cesare
Saldini 50, 20133 Milano, Italy}
\email{plankmathematics@gmail.com}

\author[F. ~Peng]{Fei Peng}
\address{F.~Peng,
School of Mathematics \& Statistics\\
The University of Melbourne\\
Parkville, VIC, 3010\\
Australia}
\email{pengf2@student.unimelb.edu.au}

\date{\today}

\keywords{derived categories, algebraic stacks, Rouquier dimension, diagonal dimension}

\subjclass[2020]{14A30 (primary), 14A20, 13A35} 

\begin{document}
    
\begin{abstract}
    This note is concerned with the Rouquier dimension of the bounded derived category of coherent complexes on a Noetherian algebraic stack. Specifically, we study the diagonal dimension of a morphism, which can be used to produce upper bounds on Rouquier dimension. First, we obtain an explicit upper bound for smooth morphisms with a regular target. Second, we identify classical generators of a fibre product, recovering a result of Elagin--Lunts--Schn\"{u}rer. Finally, we show that the diagonal dimension of a variety in arbitrary characteristic with mild singularities is at most twice its Krull dimension.
\end{abstract}

\maketitle


\section{Introduction}
\label{sec:introduction}

\subsection{What is known}
\label{sec:introduction_what_is_known}

Let $\mathcal{X}$ be a Noetherian algebraic stack. Denote by $D^b_{\operatorname{coh}}(\mathcal{X})$ the bounded derived category of complexes with coherent cohomology. A basic problem is to understand the size and complexity of this triangulated category.

We use the notion of generation introduced by Bondal--Van den Bergh \cite{Bondal/VandenBergh:2003}. This idea leads to numerical invariants that measure the complexity \cite{Rouquier:2008, Avramov/Buchweitz/Iyengar/Miller:2010}. For $G\in D^b_{\operatorname{coh}}(\mathcal{X})$, let $\langle G \rangle_n$ denote the full subcategory consisting of objects obtained from $G$ using finite coproducts, direct summands, shifts, and at most $n-1$ cones. The \textit{Rouquier dimension} is the smallest $n \geq 0$ for which there exists $G$ such that $\langle G \rangle_{n+1} = D^b_{\operatorname{coh}}(\mathcal{X})$; such an object is called a \textit{strong generator}. An object $G$ is a \textit{classical generator} if $\cup^\infty_{n=0} \langle G \rangle_n = D^b_{\operatorname{coh}}(\mathcal{X})$.

Strong and classical generators provide a coarse description of $D^b_{\operatorname{coh}}(\mathcal{X})$. Their existence is closely related to geometric properties such as the openness of the regular locus \cite[Proposition 4.6]{DeDeyn/Lank/ManaliRahul/Peng:2025}. A central problem is to determine when such generators exist (ideally with an explicit description) and to obtain bounds on the Rouquier dimension.

For schemes, these questions are well understood. In contrast, for algebraic stacks, they present additional challenges, and effective bounds on the Rouquier dimension remain subtle. Recent work has established the existence of generators in increasing generality (see e.g.\ \cite{Hall/Priver:2024, DeDeyn/Lank/ManaliRahul:2025, DeDeyn/Lank/ManaliRahul/Peng:2025}), but explicit constructions are rare (see e.g.\ \cite{Lank/Peng:2025}), and computable bounds are even more limited.

\subsection{What we do}
\label{sec:intro_what_we_do}

Following \cite[\S 4]{Elagin/Lunts:2021}, \cite[\S 4]{DeDeyn/Lank/Rahul:2024b}, and \cite[Definition 2.15]{Ballard/Favero:2012}, we study a notion of \emph{diagonal dimension} for morphisms of algebraic stacks. It provides a refinement of the Rouquier dimension and often tractable upper bounds for the Rouquier dimension. Specifically:

\begin{definition}
    \label{def:diagonal_dimension}
    Let $f\colon \mathcal{Y}\to \mathcal{X}$ be a separated morphism of Noetherian algebraic stacks. Denote by $p_i\colon \mathcal{Y}\times_{\mathcal{X}} \mathcal{Y} \to \mathcal{Y}$ the $i$-th projection morphism and $\Delta\colon \mathcal{Y}\to \mathcal{Y}\times_{\mathcal{X}} \mathcal{Y}$ the diagonal morphism. The \textbf{diagonal dimension of $f$}, denoted $\dim_{\Delta}(f)$, is the smallest $n\geq 0$ such that $\mathbf{R} \Delta_\ast \mathcal{O}_{\mathcal{Y}}\in \langle \mathbf{L}p_1^\ast G_1 \otimes^{\mathbf{L}} \mathbf{L}p_2^\ast G_2\rangle_{n+1}\subseteq D_{\operatorname{qc}}(\mathcal{Y}\times_{\mathcal{X}} \mathcal{Y})$ for some $G_1,G_2\in D^b_{\operatorname{coh}}(\mathcal{Y})$. If no such $G_1,G_2$ exist, then we define $\dim_{\Delta}(f)=\infty$.
\end{definition} 

Note that $\mathbf{R} \Delta_\ast \mathcal{O}_{\mathcal{Y}}$ is indeed in $D_{\operatorname{coh}}^b(\mathcal{Y}\times_{\mathcal{X}} \mathcal{Y})$ because the diagonal morphism $\Delta\colon \mathcal{Y}\to \mathcal{Y}\times_{\mathcal{X}} \mathcal{Y}$ is proper by assumption. In the case of schemes over a field, the diagonal dimension provides an upper bound for the Rouquier dimension. However, it is known that the two need not coincide even for smooth curves over a field; see \cite{Olander:2024}. In fact, for stacky curves, a similar observation has been made, see \cite{Bhaduri/Goldberg/Robotis:2024}. 

\subsubsection{Bounds}
\label{sec:intro_what_we_do_bounds}

Now we state our first result. Before doing so, we need a reminder.

\begin{remark}
    Let $f\colon\mathcal{X}\to\mathcal{Y}$ be a smooth morphism of algebraic stacks. The function sending a point $x$ of $\mathcal{X}$ to $\dim_x (\mathcal{X}_{f(x)})$ is locally constant on $|\mathcal{X}|$. If $\mathcal{X}$ is quasi-compact, then this function attains a maximum value. In this case, we define $\dim (f)=\max_{x\in|\mathcal{X}|}\{\dim_x (\mathcal{X}_{f(x)})\}$. See \cite[\href{https://stacks.math.columbia.edu/tag/0DRE}{Tag 0DRQ}]{StacksProject} for details.
\end{remark}

\begin{theorem}
    \label{thm:upper_bound}
    Let $R$ be a regular ring of finite Krull dimension. Let $f\colon\mathcal{Y}\to\operatorname{Spec}(R)$ be a smooth morphism of algebraic stacks of finite presentation with finite diagonal over $R$. If $\mathcal{Y}$ has finite cohomological dimension, then we have$$\dim_{\Delta} (f)\leq\dim (f)+\operatorname{cd}(\mathcal{Y})$$and
    \begin{displaymath}
        \dim D^b_{\operatorname{coh}}(\mathcal{Y})\leq (\dim R + 1)(\dim (f)+\operatorname{cd}(\mathcal{Y}) + 1) - 1 <\infty.
    \end{displaymath}
\end{theorem}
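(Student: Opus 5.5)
Since $f$ is smooth of relative dimension at most $d\colonequals\dim(f)$, the diagonal $\Delta\colon\mathcal{Y}\to\mathcal{Y}\times_R\mathcal{Y}$ is a regular immersion of codimension at most $d$ locally, and it is finite by hypothesis. We therefore first resolve $\mathbf{R}\Delta_\ast\mathcal{O}_{\mathcal{Y}}$ in a way that only involves objects of the form $\mathbf{L}p_1^\ast(-)\otimes^{\mathbf{L}}\mathbf{L}p_2^\ast(-)$, and then bound $\dim D^b_{\operatorname{coh}}(\mathcal{Y})$ by combining the diagonal bound with the regularity of $R$.

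\textbf{Step 1: the diagonal bound.} The plan is to produce a resolution of $\Delta_\ast\mathcal{O}_{\mathcal{Y}}$ of "length" $d+\operatorname{cd}(\mathcal{Y})$ by boxed objects.
\begin{enumerate}[label=(\roman*)]
    \item Because $\Delta$ is a regular immersion of codimension $\le d$, the sheaf $\Delta_\ast\mathcal{O}_{\mathcal{Y}}$ has Tor-amplitude in $[-d,0]$ on $\mathcal{Y}\times_R\mathcal{Y}$.
    \item Since $\mathcal{Y}$ has finite cohomological dimension and is of finite presentation with finite diagonal over a Noetherian base, I expect it to admit a compact generator (via the existing results on perfect and compact generation of stacks). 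It should even admit a single perfect $P$ with $\mathbf{L}p_1^\ast P^\vee\otimes\mathbf{L}p_2^\ast P$ generating $D_{\operatorname{qc}}(\mathcal{Y}\times_R\mathcal{Y})$.
    \item Write $\mathcal{O}_\Delta$ as the homotopy colimit of a Beilinson-type resolution. Concretely, take a resolution of $\mathcal{O}_\Delta$ by sums of $p_1^\ast E\otimes p_2^\ast F$, with $E,F$ vector bundles or compact objects, built via the adjunction counit $p_1^\ast p_{1\ast}(\cdot)\to(\cdot)$ iterated.
    \item Truncate this resolution using the Tor-amplitude bound (contributing $d$ steps) and the cohomological dimension of $\mathcal{Y}$ (contributing $\operatorname{cd}(\mathcal{Y})$ steps, controlling $\mathbf{R}p_{i\ast}$ of the kernels). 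The standard ghost/truncation argument, as in Rouquier's proof for smooth quasi-projective varieties and Olander's, then shows that $\mathcal{O}_\Delta$ is a summand of an object built in $d+\operatorname{cd}(\mathcal{Y})+1$ steps.
    \item Taking $G_1,G_2$ to be the direct sum of the finitely many coherent objects appearing (coherent because $\mathcal{Y}$ is Noetherian), we get $\mathcal{O}_\Delta\in\langle \mathbf{L}p_1^\ast G_1\otimes\mathbf{L}p_2^\ast G_2\rangle_{d+\operatorname{cd}+1}$.
\end{enumerate}

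\textbf{Step 2: the Rouquier dimension bound.} For $E\in D^b_{\operatorname{coh}}(\mathcal{Y})$ we have
\[
E\cong \mathbf{R}p_{2\ast}\bigl(\mathbf{L}p_1^\ast E\otimes^{\mathbf{L}}\mathbf{R}\Delta_\ast\mathcal{O}_{\mathcal{Y}}\bigr).
\]
The functor $\mathbf{R}p_{2\ast}(\mathbf{L}p_1^\ast E\otimes -)$ is exact, so applying it to the resolution from Step 1 gives
\[
E\in\bigl\langle \mathbf{R}p_{2\ast}(\mathbf{L}p_1^\ast(E\otimes G_1)\otimes \mathbf{L}p_2^\ast G_2)\bigr\rangle_{d+\operatorname{cd}+1}.
\]
By flat base change along the smooth morphism $f$, this object is $\mathbf{L}f^\ast\mathbf{R}f_\ast(E\otimes G_1)\otimes G_2$. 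Now $\mathbf{R}f_\ast(E\otimes G_1)$ lies in $D_{\operatorname{qc}}(R)$, and since $R$ is regular of finite Krull dimension, every complex of $R$-modules lies in $\langle R\rangle_{\dim R+1}$, up to coproducts. The usual argument needs care here because $\mathbf{R}f_\ast$ need not preserve coherence, as $f$ is not proper. The fix is to work in $D_{\operatorname{qc}}$ with coproduct-closed levels and then use compactness of $E$ relative to $D^b_{\operatorname{coh}}$, as in the approximation lemma for Noetherian stacks, to pass back to finite levels. The result is $E\in\langle G_2\rangle_{(\dim R+1)(d+\operatorname{cd}+1)}$, which yields the stated bound.

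\textbf{Main obstacle.} The crux is Step 1(iii)–(iv): producing a boxed resolution of the diagonal on a stack with length exactly $d+\operatorname{cd}(\mathcal{Y})$. My first attempt would be to reduce to a smooth atlas or a coarse space, using the finite diagonal and Keel–Mori, and to use that $\mathcal{O}_\Delta$ is perfect of amplitude $d$ together with ghost-lemma counting via $\operatorname{cd}$.
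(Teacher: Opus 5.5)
Your Step 2 matches the paper's argument: projection formula and flat base change to reach $\mathbf{L}f^\ast\mathbf{R}f_\ast(E\otimes^{\mathbf{L}}G_1)\otimes^{\mathbf{L}}G_2$, then $\overline{\langle\mathcal{O}_{\operatorname{Spec}(R)}\rangle}_{\dim R+1}=D_{\operatorname{qc}}(\operatorname{Spec}(R))$, then passing from $\overline{\langle-\rangle}_n$ back to $\langle-\rangle_n$. So the second inequality follows once the first is proved. Step 1, which you yourself flag as the crux, has a genuine gap: (iii)--(iv) describe the shape of an argument but do not yield the number $\dim(f)+\operatorname{cd}(\mathcal{Y})$. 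Two ingredients are missing.

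\textbf{First missing ingredient: a coherent boxed resolution.} You need a way to cover an arbitrary object of $D^b_{\operatorname{coh}}(\mathcal{Y}\times_R\mathcal{Y})$ by \emph{coherent} boxed objects. On a stack there need be no ample family and no resolution property, so vector bundles will not do. Iterating the counit $p_1^\ast p_{1\ast}(-)\to(-)$ also fails: it gives neither coherent terms ($p_{1\ast}$ of a coherent sheaf is only quasi-coherent) nor surjectivity. The paper uses \cite[Proposition 2.3]{Peng:2024}: every such $K$ receives a map from a finite sum $\bigoplus_j\mathbf{L}p_1^\ast F_j\otimes^{\mathbf{L}}\mathbf{L}p_2^\ast G_j[n_j]$, with $F_j,G_j$ coherent, that is surjective on cohomology sheaves. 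Its cocone stays in $D^{\leq 0}$ when $K$ does. Iterating this with the octahedral axiom gives triangles $C_n\to\mathbf{R}\Delta_\ast\mathcal{O}_{\mathcal{Y}}\to B_{n+1}[n+1]\to C_n[1]$ with $C_n\in\langle\bigoplus_{i\le n}K_i\rangle_{n+1}$ and $B_{n+1}\in D^b_{\operatorname{coh}}(\mathcal{Y}\times_R\mathcal{Y})^{\leq 0}$.

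\textbf{Second missing ingredient: the decisive vanishing.} You need $\operatorname{Hom}(\mathbf{R}\Delta_\ast\mathcal{O}_{\mathcal{Y}},B[n])=0$ for all $B\in D^b_{\operatorname{coh}}(\mathcal{Y}\times_R\mathcal{Y})^{\leq 0}$ and all $n>D:=\dim(f)+\operatorname{cd}(\mathcal{Y})$. Applied to $B_{D+1}$, this kills the map $\mathbf{R}\Delta_\ast\mathcal{O}_{\mathcal{Y}}\to B_{D+1}[D+1]$, so $\mathbf{R}\Delta_\ast\mathcal{O}_{\mathcal{Y}}$ is a summand of $C_D$. You name the two numerical inputs but never assemble them into this statement. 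The mechanism you suggest for $\operatorname{cd}$ (``controlling $\mathbf{R}p_{i\ast}$ of the kernels'') is not what makes it work. In the paper, the local-to-global and hypercohomology spectral sequences reduce the vanishing to two bounds:
\begin{enumerate}[label=(\alph*)]
    \item $\operatorname{\mathcal{E}\! \mathit{xt}}^q(\Delta_\ast\mathcal{O}_{\mathcal{Y}},E)=0$ for $q>\dim(f)$;
    \item $H^p$ of these sheaves vanishes for $p>\operatorname{cd}(\mathcal{Y})$.
\end{enumerate}
Bound (b) holds because each $\operatorname{\mathcal{E}\! \mathit{xt}}^q(\Delta_\ast\mathcal{O}_{\mathcal{Y}},E)$ is a coherent $\Delta_\ast\mathcal{O}_{\mathcal{Y}}$-module. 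Since $\Delta$ is finite, hence affine, it is of the form $\Delta_\ast F_q$, with cohomology $H^p(\mathcal{Y},F_q)$. This is why $\operatorname{cd}(\mathcal{Y})$ appears rather than $\operatorname{cd}(\mathcal{Y}\times_R\mathcal{Y})$.

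\textbf{Smaller issues.} Your item (i) is wrong as stated. With nontrivial finite stabilizers, $\Delta$ is not a monomorphism, hence not a regular immersion. For example, for $[\mathbb{A}^1/(\mathbb{Z}/2)]$ with the sign action in characteristic $\neq 2$, the pullback of $\Delta$ to the atlas is the disjoint union of the diagonal and antidiagonal lines, mapping onto two lines meeting at the origin. The bound (a) must instead be computed on a smooth atlas $U$. There $U\times_{\mathcal{Y}}U\to U\times_R U$ is a finite map of regular schemes, and Auslander--Buchsbaum bounds the local projective dimension by $\dim_x(\mathcal{Y}_{f(x)})$. Finally, the compact generator in (ii) is unnecessary. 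Via homotopy colimits it would only give membership in $\overline{\langle-\rangle}$, with no level bound.
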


This yields an explicit and computable upper bound for the Rouquier dimension of $D^b_{\operatorname{coh}}(\mathcal{Y})$. Here, $\operatorname{cd}(-)$ denotes the cohomological dimension of an algebraic stack (see \Cref{sec:preliminaries_algebraic_stacks}). When $R$ is a field, \Cref{thm:upper_bound} recovers known bounds for smooth varieties \cite[Proposition 7.9]{Rouquier:2008}, smooth algebraic spaces \cite[Lemma 1.2.9]{Olander:2022}, and smooth Deligne--Mumford stacks \cite[Lemma 2.20]{Ballard/Favero:2012}.

In contrast, our result applies in greater generality. For example, it applies when $\mathcal{Y}$ is a separated algebraic stack which is smooth and quasi-DM over a field of characteristic zero with affine stabilizers (see \cite[Theorem 2.1(1)]{Hall/Rydh:2015}). It also applies when $\mathcal{Y}$ is a separated quasi-DM algebraic stack which is smooth over a DVR (possibly of mixed characteristic), provided the stabilizers are affine and nice in the sense of \cite{Hall/Rydh:2015}.

\subsubsection{Fibre products}
\label{sec:intro_what_we_do_fibre_products}

Our second result identifies generators for fibre products in terms of those of the factors.

\begin{proposition}
    \label{prop:ELS_fibre_product}
    Let $\mathcal{S}$ be a concentrated quasi-DM regular algebraic stack with separated diagonal. Consider the following diagram of algebraic stacks
    \begin{displaymath}
        \begin{tikzcd}
            {\mathcal{Y}^\prime_1} & {\mathcal{Y}_1} && {\mathcal{Y}_2} & {\mathcal{Y}^\prime_2} \\
            && {\mathcal{S}}
            \arrow["{h_1}", from=1-1, to=1-2]
            \arrow[from=1-1, to=2-3]
            \arrow["{f_1}", from=1-2, to=2-3]
            \arrow["{f_2}"', from=1-4, to=2-3]
            \arrow["{h_2}"', from=1-5, to=1-4]
            \arrow[from=1-5, to=2-3]
        \end{tikzcd}
    \end{displaymath}
    such that
    \begin{enumerate}
        \item the $f_i$ are concentrated, flat, finitely presented, and surjective,
        \item the $h_i$ are proper, finitely presented, and surjective, and
        \item the $f_i\circ h_i$ are smooth, concentrated, and quasi-DM.
    \end{enumerate}
    If $G_i$ is a classical generator for $D^b_{\operatorname{coh}}(\mathcal{Y}_i)$ for each $i$, then $\mathbf{L}g_1^\ast G_1 \otimes^{\mathbf{L}} \mathbf{L}g_2^\ast G_2$ is a classical generator for $D^b_{\operatorname{coh}}(\mathcal{Y}_1\times_{\mathcal{S}} \mathcal{Y}_2)$ where $g_i \colon \mathcal{Y}_1\times_{\mathcal{S}} \mathcal{Y}_2 \to \mathcal{Y}_i$ are the natural projections.
\end{proposition}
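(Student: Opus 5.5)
Set $\mathcal{Y}=\mathcal{Y}_1\times_{\mathcal{S}}\mathcal{Y}_2$, $\mathcal{Y}'=\mathcal{Y}'_1\times_{\mathcal{S}}\mathcal{Y}'_2$ and $h=h_1\times h_2\colon \mathcal{Y}'\to\mathcal{Y}$. The strategy is to prove the statement first on the regular stack $\mathcal{Y}'$ and then descend it along $h$.

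\emph{Regularity of $\mathcal{Y}'$.} Since each $f_i\circ h_i$ is smooth and $\mathcal{S}$ is regular, each $\mathcal{Y}'_i$ is regular, and so is $\mathcal{Y}'$: the composite $\mathcal{Y}'\to\mathcal{Y}'_1\to\mathcal{S}$ is smooth because smoothness is stable under base change. Consequently $D^b_{\operatorname{coh}}(\mathcal{Y}')=\operatorname{Perf}(\mathcal{Y}')$, and the same holds on each $\mathcal{Y}'_i$.

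\emph{Generation on $\mathcal{Y}'$.} Perfect complexes on a concentrated quasi-DM stack with separated diagonal are compactly generated, by the Hall--Rydh results on compact generation for such stacks. Next, exterior products of compact generators $P_i$ of $D_{\operatorname{qc}}(\mathcal{Y}'_i)$ compactly generate $D_{\operatorname{qc}}(\mathcal{Y}')$. This follows from the Künneth-type equivalence $D_{\operatorname{qc}}(\mathcal{Y}'_1)\otimes_{D_{\operatorname{qc}}(\mathcal{S})}D_{\operatorname{qc}}(\mathcal{Y}'_2)\simeq D_{\operatorname{qc}}(\mathcal{Y}')$ for perfect stacks. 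Alternatively, one can argue directly: if $\operatorname{Hom}(P_1\boxtimes P_2,E[n])=0$ for all $n$, then by adjunction and the projection formula (using that the maps are concentrated) one gets $\mathbf{R}g'_{1\ast}(\mathbf{L}g_2'^\ast P_2^\vee\otimes E)=0$. Replacing $P_2$ by the full subcategory it generates, this is killed for all perfect complexes, and one concludes $E=0$ via an affine flat cover argument. This gives the main intermediate claim: $\mathbf{L}g_1'^\ast G'_1\otimes\mathbf{L}g_2'^\ast G'_2$ classically generates $\operatorname{Perf}(\mathcal{Y}')=D^b_{\operatorname{coh}}(\mathcal{Y}')$ whenever $G'_i$ classically generates $D^b_{\operatorname{coh}}(\mathcal{Y}'_i)$, by Thomason--Neeman.

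\emph{Descent along $h$.} We need two facts.
\begin{enumerate}
\item The object $\mathbf{R}h_\ast\mathcal{O}_{\mathcal{Y}'}$ (more generally $\mathbf{R}h_\ast$ of a classical generator) classically generates $D^b_{\operatorname{coh}}(\mathcal{Y})$. This holds because $h$ is proper and surjective: one takes $\mathbf{R}h_\ast$ of a classical generator of $D^b_{\operatorname{coh}}(\mathcal{Y}')$ and uses the known descent of classical generators along proper surjective morphisms, via the "level along proper surjective morphisms" results available in the cited De Deyn--Lank--Manali Rahul works.
\item We must compare $\mathbf{R}h_\ast(\mathbf{L}g_1'^\ast G'_1\otimes \mathbf{L}g_2'^\ast G'_2)$ with $\mathbf{L}g_1^\ast\mathbf{R}h_{1\ast}G'_1\otimes\mathbf{L}g_2^\ast\mathbf{R}h_{2\ast}G'_2$. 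This is a Künneth formula, proven by factoring $h=(h_1\times 1)\circ(1\times h_2)$ and applying flat base change (the $f_i$ are flat, so the relevant squares are Tor-independent) together with the projection formula.
\end{enumerate}
Since $\mathbf{R}h_{i\ast}G'_i\in D^b_{\operatorname{coh}}(\mathcal{Y}_i)=\langle G_i\rangle$, and $\mathbf{L}g_1^\ast(-)\otimes\mathbf{L}g_2^\ast(-)$ is exact in each variable, the exterior product $\mathbf{L}g_1^\ast\mathbf{R}h_{1\ast}G'_1\otimes\mathbf{L}g_2^\ast\mathbf{R}h_{2\ast}G'_2$ lies in $\langle \mathbf{L}g_1^\ast G_1\otimes\mathbf{L}g_2^\ast G_2\rangle$. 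The $g_i$ are flat, so these pullbacks stay in $D^b_{\operatorname{coh}}$.

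\emph{Main obstacle.} The hard step is fact (1): $\mathbf{R}h_\ast$ of a classical generator of $D^b_{\operatorname{coh}}(\mathcal{Y}')$ need not obviously generate $D^b_{\operatorname{coh}}(\mathcal{Y})$ for stacks. The approach I would try first is Noetherian induction on closed substacks, using generic flatness to find an open dense substack over which $h$ is flat. There one uses that $\mathcal{O}$ is a summand-level consequence of $\mathbf{R}h_\ast\mathcal{O}$ for proper flat surjective maps, or more simply the approach of \cite{DeDeyn/Lank/ManaliRahul/Peng:2025}. One then glues via localization sequences for $D^b_{\operatorname{coh}}$ of stacks, which requires the concentratedness hypotheses.
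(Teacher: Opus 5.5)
Your architecture matches the paper's. You pass to the regular stack $\mathcal{Y}'=\mathcal{Y}'_1\times_{\mathcal{S}}\mathcal{Y}'_2$, generate $\operatorname{Perf}(\mathcal{Y}')=D^b_{\operatorname{coh}}(\mathcal{Y}')$ by exterior products, and push forward along the proper surjection $\mathcal{Y}'\to\mathcal{Y}_1\times_{\mathcal{S}}\mathcal{Y}_2$. You then identify the result with an exterior product of the $\mathbf{R}(h_i)_\ast G'_i$ by flat base change and the projection formula, and compare with arbitrary $G_i$; your containment argument for this last step is fine.

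\textbf{The gap is fact (1), on which everything rests.} It does not hold ``because $h$ is proper and surjective.'' Take $h\colon\operatorname{Spec}k\to B(\mathbb{Z}/p)$ with $\operatorname{char}k=p$. This $h$ is finite, flat, surjective, even affine, yet $\mathbf{R}h_\ast k=k[\mathbb{Z}/p]$ classically generates only $\operatorname{Perf}(k[\mathbb{Z}/p])$, which misses the trivial representation. Such residual gerbes are exactly what your Noetherian induction meets at generic points. So the claim that $\mathcal{O}$ is obtained at summand level from $\mathbf{R}h_\ast\mathcal{O}$ for proper flat surjective maps is false for stacks. Concentratedness is needed at the generic step, not merely for gluing. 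Separately, $\mathbf{R}h_\ast\mathcal{O}_{\mathcal{Y}'}$ by itself is not a classical generator even for schemes (e.g.\ $h=\mathrm{id}_{\mathbb{P}^1}$).

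The paper closes this step by checking what you skip. The $h_i$ are concentrated because the $f_i\circ h_i$ are \cite[Lemma 2.3]{Hall/Lamarche/Lank/Peng:2025}, hence so are the base changes $h'_i,h''_i$ and the relevant composites. Together with properness, this makes \cite[Corollary 6.3]{Hall/Lamarche/Lank/Peng:2025} applicable. Note also that, unlike $B(\mathbb{Z}/p)$, the target $\mathcal{Y}_1\times_{\mathcal{S}}\mathcal{Y}_2$ is concentrated, since $\mathcal{S}$ and the $f_i$ are. The same concentratedness is what makes $\mathbf{R}(h_i)_\ast G'_i$ bounded, which you need in order to place it in $\langle G_i\rangle$.

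\textbf{You also misplace the diagonal hypothesis.} Only $\mathcal{S}$ is assumed to have separated diagonal. The $\mathcal{Y}'_i$ and $\mathcal{Y}'$ have quasi-finite diagonal, but nothing forces it to be separated, so the Hall--Rydh compact generation result you quote is not directly available for them; the paper uses \cite[Corollary 1.2]{Lank:2026} instead.

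The real use of the hypothesis on $\mathcal{S}$ is that its quasi-finite separated diagonal is quasi-affine by Zariski's main theorem. This is what \cite[Corollary 5.10]{Neeman:2023} needs to show that exterior products of compact generators compactly generate $D_{\operatorname{qc}}(\mathcal{Y}')$. Your ``affine flat cover argument'' silently needs the same input:
\begin{enumerate}
\item For a smooth $U\to\mathcal{Y}'_1$ with $U$ affine, $U\to\mathcal{S}$ is then quasi-affine.
\item Hence so is $U\times_{\mathcal{S}}\mathcal{Y}'_2\to\mathcal{Y}'_2$.
\item This guarantees that pullbacks of perfect complexes from $\mathcal{Y}'_2$ still generate $D_{\operatorname{qc}}(U\times_{\mathcal{S}}\mathcal{Y}'_2)$, so your vanishing forces $E=0$.
\end{enumerate}
The Ben-Zvi--Francis--Nadler K\"{u}nneth equivalence is not available as a substitute, since their perfect stacks have affine diagonal.
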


Notably, \Cref{prop:ELS_fibre_product} applies broadly and does not require the base $\mathcal{S}$ to be separated or of finite Krull dimension. For instance, it applies to separated integral schemes of finite type over a perfect field via \cite[Theorem 4.1]{deJong:1996}, and to reduced Deligne--Mumford stacks of finite presentation over a field of characteristic zero via \cite[\S 5.1]{Temkin:2012} and \cite[\S 3.6]{Edidin/Rydh:2021}. The separated diagonal assumption, however, is necessary for us to ensure that $\mathcal{S}$ has quasi-affine diagonal.

Moreover, our approach recovers \cite[Theorem 4.18]{Elagin/Lunts/Schnurer:2020} for varieties over a perfect field and provides a proof without dg methods. Unlike loc.\ cit., we work directly at the level of algebraic stacks and extend to Deligne--Mumford stacks admitting suitable regular covers. It would be interesting for future work to study analogs of our results for coherent noncommutative algebras over an algebraic stack (see e.g.\ \cite[Theorem 4.15]{Elagin/Lunts/Schnurer:2020}).

\subsubsection{Comparisons}
\label{sec:intro_what_we_do_comparions}

Our final result compares the diagonal dimension along morphisms whose unit for derived pullback/pushforward is an isomorphism.

\begin{theorem}
    \label{thm:compare_diagonal_morphism}
    Let $\mathcal{S}$ be a Noetherian algebraic stack. Consider concentrated finitely presented flat and separated morphisms $f_i\colon \mathcal{Y}_i \to \mathcal{S}$. If there exists a concentrated $\mathcal{S}$-morphism $f\colon \mathcal{Y}_2 \to \mathcal{Y}_1$ such that $\mathcal{O}_{\mathcal{Y}_1}\xrightarrow{ntrl.} \mathbf{R} f_\ast \mathcal{O}_{Y_2}$ is an isomorphism and $\mathbf{R}f_\ast$ preserves coherent cohomology (e.g. $f$ is proper), then $\dim_{\Delta}(f_1) \leq \dim_{\Delta}(f_2)$.
\end{theorem}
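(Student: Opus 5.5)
The plan is to push the generation statement for $\mathbf{R}\Delta_{2,\ast}\mathcal{O}_{\mathcal{Y}_2}$ forward along $f\times_{\mathcal{S}} f\colon \mathcal{Y}_2\times_{\mathcal{S}}\mathcal{Y}_2\to \mathcal{Y}_1\times_{\mathcal{S}}\mathcal{Y}_1$. Write $F \colonequals f\times_{\mathcal{S}} f$, let $\Delta_i$ be the diagonal of $f_i$, and let $p_j^{(i)}$ be the projections of $\mathcal{Y}_i\times_{\mathcal{S}}\mathcal{Y}_i$. Suppose $n=\dim_\Delta(f_2)<\infty$, witnessed by $G_1,G_2\in D^b_{\operatorname{coh}}(\mathcal{Y}_2)$, so that $\mathbf{R}\Delta_{2,\ast}\mathcal{O}_{\mathcal{Y}_2}\in\langle \mathbf{L}p_1^{(2)\ast}G_1\otimes^{\mathbf{L}}\mathbf{L}p_2^{(2)\ast}G_2\rangle_{n+1}$. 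Since $F$ is concentrated (it factors as $f\times 1$ followed by $1\times f$, each a base change of $f$, and concentratedness is stable under base change for the flat, finitely presented bases at hand), $\mathbf{R}F_\ast$ is a triangulated functor on $D_{\operatorname{qc}}$. It therefore sends $\langle E\rangle_{n+1}$ into $\langle \mathbf{R}F_\ast E\rangle_{n+1}$. It thus suffices to prove two things: first, $\mathbf{R}F_\ast\mathbf{R}\Delta_{2,\ast}\mathcal{O}_{\mathcal{Y}_2}\cong \mathbf{R}\Delta_{1,\ast}\mathcal{O}_{\mathcal{Y}_1}$; second, $\mathbf{R}F_\ast(\mathbf{L}p_1^{(2)\ast}G_1\otimes^{\mathbf{L}}\mathbf{L}p_2^{(2)\ast}G_2)\cong \mathbf{L}p_1^{(1)\ast}\mathbf{R}f_\ast G_1\otimes^{\mathbf{L}}\mathbf{L}p_2^{(1)\ast}\mathbf{R}f_\ast G_2$. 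Since $\mathbf{R}f_\ast$ preserves $D^b_{\operatorname{coh}}$ (using concentratedness for boundedness), the objects $\mathbf{R}f_\ast G_j$ lie in $D^b_{\operatorname{coh}}(\mathcal{Y}_1)$, and so $\dim_\Delta(f_1)\le n$.

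For the first isomorphism, I use $F\circ\Delta_2=\Delta_1\circ f$, which gives $\mathbf{R}F_\ast\mathbf{R}\Delta_{2,\ast}\mathcal{O}_{\mathcal{Y}_2}\cong\mathbf{R}\Delta_{1,\ast}\mathbf{R}f_\ast\mathcal{O}_{\mathcal{Y}_2}$. The hypothesis $\mathcal{O}_{\mathcal{Y}_1}\xrightarrow{\sim}\mathbf{R}f_\ast\mathcal{O}_{\mathcal{Y}_2}$ then identifies this with $\mathbf{R}\Delta_{1,\ast}\mathcal{O}_{\mathcal{Y}_1}$. This step only needs composition of derived pushforwards, which holds for concentrated morphisms.

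For the second isomorphism, I factor $F=(1\times f)\circ(f\times 1)$ through $\mathcal{Y}_1\times_{\mathcal{S}}\mathcal{Y}_2$ and treat one factor at a time. Consider $f\times 1\colon\mathcal{Y}_2\times_{\mathcal{S}}\mathcal{Y}_2\to\mathcal{Y}_1\times_{\mathcal{S}}\mathcal{Y}_2$. The object $\mathbf{L}p_2^{(2)\ast}G_2$ is pulled back along $f\times 1$ from $\mathcal{Y}_1\times_{\mathcal{S}}\mathcal{Y}_2$, so the projection formula for concentrated morphisms of algebraic stacks (Hall--Rydh) pulls it outside the pushforward. The remaining term $\mathbf{R}(f\times1)_\ast\mathbf{L}p_1^{(2)\ast}G_1$ is then handled by flat base change. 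The square with $f\times 1$ over $f$, whose horizontal maps are the first projections to $\mathcal{Y}_2$ and $\mathcal{Y}_1$, is cartesian. Its horizontal projection is a base change of $f_2$ and is therefore flat, so we get $\mathbf{L}q^\ast\mathbf{R}f_\ast G_1$. The same argument applies to $1\times f$, and composing the two gives the claimed formula.

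The main obstacle is the base change step. Flat base change for concentrated morphisms of algebraic stacks needs tor-independence, or flatness of one leg, together with quasi-compactness and quasi-separatedness hypotheses. The flatness of $f_1$ and $f_2$ supplies exactly this, since the projections $\mathcal{Y}_i\times_{\mathcal{S}}\mathcal{Y}_j\to\mathcal{Y}_i$ are flat. I will cite Hall--Rydh's base change and projection formula for concentrated morphisms, and check that $f\times 1$ is concentrated as a base change of $f$ along a flat morphism.
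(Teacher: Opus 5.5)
Your proposal is correct and follows essentially the same route as the paper. You push the generation statement forward along $f\times_{\mathcal{S}} f$, identify the pushforward of $\mathbf{R}\Delta_{2,\ast}\mathcal{O}_{\mathcal{Y}_2}$ with $\mathbf{R}\Delta_{1,\ast}\mathcal{O}_{\mathcal{Y}_1}$ via $\mathcal{O}_{\mathcal{Y}_1}\cong\mathbf{R}f_\ast\mathcal{O}_{\mathcal{Y}_2}$, and compute the pushforward of the external tensor product by factoring through $\mathcal{Y}_1\times_{\mathcal{S}}\mathcal{Y}_2$ with flat base change and the Hall--Rydh projection formula; the only cosmetic difference is that the paper first reduces to a single object $G$ via \Cref{lem:alt_Delta_dim}, while you keep $G_1,G_2$.
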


There are important situations in which \Cref{thm:compare_diagonal_morphism} applies. 

\begin{corollary}
    \label{cor:compare_diagonal_morphism}
    \hfill
    \begin{enumerate}
        \item (characteristic zero) Let $f\colon X \to \operatorname{Spec}(k)$ be a morphism of finite type from a separated integral scheme to a field of characteristic zero. If $X$ has rational singularities, then $\dim_{\Delta}(f)\leq 2 \dim X$. 
        \item (arbitrary characteristic) Let $f\colon X\to \operatorname{Spec}(k)$ be a separated irreducible scheme of finite type over a perfect field $k$. If $X$ has at worst linearly reductive singularities, then $\dim_{\Delta}(f)\leq 2 \dim X$.
    \end{enumerate}
\end{corollary}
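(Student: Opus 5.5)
The plan is to apply \Cref{thm:compare_diagonal_morphism} with $\mathcal{S}=\operatorname{Spec}(k)$, $\mathcal{Y}_1=X$, and $\mathcal{Y}_2$ a smooth variety $Y$ admitting a proper $k$-morphism $f\colon Y\to X$ with $\mathcal{O}_X\to\mathbf{R}f_\ast\mathcal{O}_Y$ an isomorphism. The smooth case of \Cref{thm:upper_bound} then bounds $\dim_\Delta(Y\to\operatorname{Spec}k)$. All hypotheses of \Cref{thm:compare_diagonal_morphism} on the structure morphisms are automatic: morphisms of finite type to a field are flat and finitely presented, separatedness is assumed (and $Y$ is taken separated), and quasi-compact quasi-separated morphisms of schemes are concentrated. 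Since $f$ is proper, $\mathbf{R}f_\ast$ preserves coherent cohomology.

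For (1), I take $f\colon Y\to X$ a resolution of singularities (Hironaka), which is proper birational with $Y$ smooth, separated, integral, and $\dim Y=\dim X$. The definition of rational singularities gives $\mathbf{R}f_\ast\mathcal{O}_Y\simeq\mathcal{O}_X$, so \Cref{thm:compare_diagonal_morphism} yields $\dim_\Delta(X\to k)\le\dim_\Delta(Y\to k)$. For $Y$ apply \Cref{thm:upper_bound} with $R=k$: here $\dim(f)=\dim Y$ and $\operatorname{cd}(Y)\le\dim Y$ by Grothendieck vanishing (a Noetherian scheme of dimension $d$ has quasi-coherent cohomology vanishing above $d$). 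Hence $\dim_\Delta(Y)\le 2\dim Y=2\dim X$.

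For (2), I would use the definition of linearly reductive singularities (in the sense of Hashimoto, or the étale-local quotient notion): the paper presumably intends a formulation in which $X$ receives a proper (or suitably good) morphism from a smooth object, or is a good moduli space $\pi\colon\mathcal{Y}\to X$ of a smooth stack $\mathcal{Y}=[U/G]$ with $G$ linearly reductive. I would take the global version: $X$ is a good moduli space of a smooth, separated-diagonal (affine-diagonal) algebraic stack $\mathcal{Y}$ of finite type over $k$ with linearly reductive stabilizers. Then $\pi_\ast$ is exact on quasi-coherent sheaves, so $\mathbf{R}\pi_\ast\mathcal{O}_{\mathcal{Y}}=\pi_\ast\mathcal{O}_{\mathcal{Y}}=\mathcal{O}_X$, and $\pi_\ast$ preserves coherence (Alper). $\pi$ is concentrated, being cohomologically affine. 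So \Cref{thm:compare_diagonal_morphism} gives $\dim_\Delta(X)\le\dim_\Delta(\mathcal{Y}\to k)$.

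The main obstacle is then bounding $\dim_\Delta(\mathcal{Y})$ by $2\dim X$ using \Cref{thm:upper_bound}. That requires three things. First, the stack must have finite diagonal, which forces restricting to tame, finite-stabilizer situations; this is likely why perfectness of $k$ and a quasi-DM-type cover enter. Second, linear reductivity gives $\operatorname{cd}(\mathcal{Y})=\operatorname{cd}(X)\le\dim X$. Third, $\dim(\mathcal{Y}\to k)=\dim X$ must hold when the stabilizers are finite. If the paper's notion instead provides a proper map from a smooth (tame) stack with $\mathcal{O}_X\simeq\mathbf{R}f_\ast\mathcal{O}$, the same three checks apply. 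In either case the inequality $\dim_\Delta\le 2\dim X$ follows, and the delicate point is matching the definition of linearly reductive singularities to the finite-diagonal hypothesis of \Cref{thm:upper_bound}.
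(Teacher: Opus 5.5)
Part (1) is correct and is essentially the paper's argument: a resolution $g\colon\widetilde{X}\to X$, the isomorphism $\mathcal{O}_X\cong\mathbf{R}g_\ast\mathcal{O}_{\widetilde{X}}$ from rationality, \Cref{thm:upper_bound} with $\operatorname{cd}(\widetilde{X})\le\dim X$, then \Cref{thm:compare_diagonal_morphism}.

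Part (2) has a genuine gap. You never derive the stack you need from the hypothesis; you assume it (``I would take the global version''). Linearly reductive singularities is an \'{e}tale-local condition: an \'{e}tale cover $\{U_i/G_i\to X\}$ with $U_i$ smooth and $G_i$ finite linearly reductive. The local quotient stacks $[U_i/G_i]$ do not glue by themselves, because such presentations are far from unique.

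The missing input is Satriano's theorem \cite[Theorem 1.10]{Satriano:2012a}. For $k$ perfect and $X$ separated (this is exactly where those hypotheses are used), it produces a canonical $k$-smooth, separated, tame Artin stack $\mathcal{X}$ with coarse moduli space $\pi\colon\mathcal{X}\to X$.

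The class you propose instead is also too large. For good moduli spaces of smooth stacks with arbitrary linearly reductive stabilizers, positive-dimensional stabilizers make the diagonal non-finite, so \Cref{thm:upper_bound} does not apply. Moreover, $\dim(\mathcal{Y}\to k)$ can exceed $\dim X$: take $[\mathbb{A}^n/\mathbb{G}_m]$ with the scaling action, whose good moduli space is a point.

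Once you have Satriano's stack, the three ``checks'' you list are exactly what remains, and you should carry them out rather than leave them open:
\begin{enumerate}
    \item Tameness includes finite inertia. Together with separatedness, this makes the diagonal proper and quasi-finite, hence finite.
    \item The coarse space of a tame stack is a good moduli space. So $\pi_\ast$ is exact, $\mathbf{R}\pi_\ast\mathcal{O}_{\mathcal{X}}\cong\mathcal{O}_X$, and $\operatorname{cd}(\mathcal{X})\le\dim X$ by \Cref{ex:cohomological_dimension_good_moduli_space}.
    \item By Keel--Mori \cite[Theorem 6.12]{Rydh:2013}, $\pi$ is a proper universal homeomorphism. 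Hence $\dim\mathcal{X}=\dim X$ (so $\dim(f\circ\pi)\le\dim X$), and $\mathbf{R}\pi_\ast$ preserves $D^b_{\operatorname{coh}}$.
\end{enumerate}
With these in hand, \Cref{thm:upper_bound} and \Cref{thm:compare_diagonal_morphism} finish the argument as you intended.
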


These results appear to provide a new general bound on the diagonal dimension for varieties with mild singularities. In the case of characteristic zero, \cite{Hanlon/Hicks/Lazarev:2024} and \cite{Favero/Huang:2023} have shown that the diagonal dimension for many toric varieties is precisely the Krull dimension. However, \Cref{cor:compare_diagonal_morphism} holds for cases which need not be toric and provides an upper bound (e.g.\ quotient singularities, see \Cref{ex:char_zero_non_toric}). Moreover, in positive characteristics, our results are applicable to varieties with tame quotient singularities. See \Cref{rmk:quotient_singularities}.

\begin{ack}
    The authors thank Anirban Bhaduri, Timothy De Deyn, Tianqi Feng, Jack Hall, and Oliver Li for discussions. Also, we greatly appreciate discussions with Alexey Elagin, which motivated \Cref{thm:compare_diagonal_morphism}. Lank was supported by the ERC Advanced Grant 101095900-TriCatApp. Peng was supported by the Australian Research Council DP210103397 and FT210100405, a Melbourne Research Scholarship, and the Science Abroad Traveling Scholarship offered by the University of Melbourne. We appreciate the referee for feedback given on an earlier version.
\end{ack}

\section{Preliminaries}
\label{sec:preliminaries}

\subsection{Generation}
\label{sec:preliminaries_generation}

We discuss generation and dimension for triangulated categories. See \cite{Bondal/VandenBergh:2003, Rouquier:2008} for details. Let $\mathcal{T}$ be a triangulated category with shift functor $[1]\colon \mathcal{T} \to \mathcal{T}$. Consider a subcategory $\mathcal{S} \subseteq \mathcal{T}$. A triangulated subcategory of $\mathcal{T}$ is called \textbf{thick} if it is closed under direct summands. Denote by $\langle \mathcal{S} \rangle$ the smallest thick subcategory of $\mathcal{T}$ containing $\mathcal{S}$; if $\mathcal{S}$ consists of a single object $G$, we write $\langle \mathcal{S} \rangle := \langle G \rangle$. Set $\operatorname{add}(\mathcal{S})$ to be the smallest strictly full subcategory of $\mathcal{T}$ containing $\mathcal{S}$ that is closed under shifts, finite coproducts, and direct summands. Inductively, let $\langle \mathcal{S} \rangle_0$ consist of all objects in $\mathcal{T}$ isomorphic to the zero object, $\langle \mathcal{S} \rangle_1 := \operatorname{add}(\mathcal{S})$, and 
\begin{displaymath}
    \langle \mathcal{S} \rangle_n := \operatorname{add} \{ \operatorname{cone}(\phi) \mid \phi \in \operatorname{Hom}_{\mathcal{T}} (\langle \mathcal{S} \rangle_{n-1}, \langle \mathcal{S} \rangle_1) \}.
\end{displaymath}
It can be checked that $\langle \mathcal{S} \rangle = \cup_{n=0}^\infty \langle \mathcal{S} \rangle_n$. We call an object $G \in \mathcal{T}$ a \textbf{classical generator} if $\langle G \rangle = \mathcal{T}$. Moreover, if there exists $n \ge 0$ such that $\langle G \rangle_{n+1} = \mathcal{T}$, then $G$ is called a \textbf{strong generator}. The smallest integer $n \ge 0$ satisfying $\mathcal{T} = \langle G \rangle_{n+1}$ for some $G \in \mathcal{T}$ is called the \textbf{Rouquier dimension} of $\mathcal{T}$ and is denoted by $\dim \mathcal{T}$.

Now, assume that $\mathcal{T}$ admits all small coproducts. Let $\operatorname{Add}(\mathcal{S})$ be the smallest strictly full subcategory of $\mathcal{T}$ containing $\mathcal{S}$ that is closed under shifts, small coproducts, and direct summands. Inductively, let $\overline{\langle \mathcal{S} \rangle}_0$ consist of all objects in $\mathcal{T}$ isomorphic to the zero object, $\overline{\langle \mathcal{S} \rangle}_1 := \operatorname{Add}(\mathcal{S})$, and
\begin{displaymath}
    \overline{\langle \mathcal{S} \rangle}_n := \operatorname{Add} \{ \operatorname{cone}(\phi) \mid \phi \in \operatorname{Hom}_{\mathcal{T}} (\overline{\langle \mathcal{S} \rangle}_{n-1}, \overline{\langle \mathcal{S} \rangle}_1) \}.
\end{displaymath}

\begin{example}
    \label{ex:regular_ring}
    Let $X=\operatorname{Spec}(R)$ where $R$ is a regular ring of finite Krull dimension. Then $\overline{\langle\mathcal{O}_X \rangle}_{\dim X + 1} = D_{\operatorname{qc}}(X)$. On some level, this goes as far back as \cite{Kelly:1965}, \cite{Street:1973}, and \cite[Corollary 8.4]{Christensen:1998}; see \cite[Corollary 4.3.13]{Letz:2020} for a modern treatment.
\end{example}

The collection of compact objects in $\mathcal{T}$ will be denoted by $\mathcal{T}^c$. These form a triangulated subcategory of $\mathcal{T}$. When $\mathcal{T}$ admits small coproducts, we say $\mathcal{T}$ is \textbf{compactly generated} if it coincides with the smallest triangulated subcategory of $\mathcal{T}$ containing $\mathcal{T}^c$ and closed under small coproducts. This is equivalent to the condition that, for any $E \in \mathcal{T}$ satisfying $\operatorname{Hom}(P, E) = 0$ for all $P \in \mathcal{T}^c$, one has $E \cong 0$ \cite[Lemma 2.2.1]{Schwede/Shipley:2003}. Note that classical generators for $\mathcal{T}^c$ coincide with compact generators for $\mathcal{T}$ \cite[\href{https://stacks.math.columbia.edu/tag/09SR}{Tag 09SR}]{StacksProject}.

\begin{example}
    Let $X$ be a quasi-compact quasi-separated scheme. Then $D_{\operatorname{qc}}(X)^c=\operatorname{Perf}(X)$, and moreover, $\operatorname{Perf}(X)$ admits a classical generator. See \cite[Theorem 3.1.1]{Bondal/VandenBergh:2003}.
\end{example}

\subsection{Algebraic stacks}
\label{sec:preliminaries_algebraic_stacks}

We follow \cite{StacksProject} for conventions on algebraic stacks. For the derived pullback/pushforward adjunction, we follow \cite[\S1]{Hall/Rydh:2017} and \cite{Olsson:2007a,Laszlo/Olsson:2008a,Laszlo/Olsson:2008b}. Unless otherwise specified, symbols such as $X$, $Y$, etc.\ denote schemes or algebraic spaces, while $\mathcal{X}$, $\mathcal{Y}$, etc.\ denote algebraic stacks. In this subsection, let $\mathcal{X}$ be a quasi-compact quasi-separated algebraic stack.

\subsubsection{Categories}
\label{sec:prelim_stacks_categories}

$\operatorname{Mod}(\mathcal{X})$ is the Grothendieck abelian category of sheaves of $\mathcal{O}_\mathcal{X}$-modules on the lisse-\'{e}tale site of $\mathcal{X}$. $\operatorname{Qcoh}(\mathcal{X})$ is the full subcategory of $\operatorname{Mod}(\mathcal{X})$ consisting of quasi-coherent sheaves. $D(\mathcal{X}) := D(\operatorname{Mod}(\mathcal{X}))$ is the derived category of $\operatorname{Mod}(\mathcal{X})$. By $D_{\operatorname{qc}}(\mathcal{X})$ (resp. $D_{\operatorname{coh}}^b(\mathcal{X})$), we mean the full subcategory of $D(\mathcal{X})$ consisting of complexes (resp. bounded complexes) with quasi-coherent (resp. coherent) cohomology sheaves. By $\operatorname{Perf}(\mathcal{X})$, we mean the full subcategory of perfect complexes in $D_{\operatorname{qc}}(\mathcal{X})$. If $\mathcal{X}$ is Noetherian and affine-pointed, then $\operatorname{coh}(\mathcal{X})$ is the full subcategory of $\operatorname{Mod}(\mathcal{X})$ consisting of coherent sheaves and there is an equivalence $D^b_{\operatorname{coh}}(\mathcal{X})\cong D^b(\operatorname{coh}(\mathcal{X}))$. Indeed, we have an equivalence $D_{\operatorname{Coh}(\mathcal{X})}^{b}(\operatorname{QCoh}(\mathcal{X}))\cong D^{b}_{\operatorname{Coh}}(\mathcal{X})$ by restricting the equivalence in \cite[Theorem C.1]{Hall/Neeman/Rydh:2019}. The equivalence $D^{b}(\operatorname{Coh}(\mathcal{X}))\cong D_{\operatorname{Coh}(\mathcal{X})}^{b}(\operatorname{QCoh}(\mathcal{X}))$ follows from a standard argument (e.g. \cite[Proposition 3.5]{Huybrechts:2006}). Denote by $\operatorname{\mathbb{R}\mathcal{H}\! \mathit{om}}$ (resp.\ $\operatorname{\mathbf{R}\mathcal{H}\! \mathit{om}}$) the `internal hom' for $D(\mathcal{X})$ (resp.\ $D_{\operatorname{qc}}(\mathcal{X})$).

\subsubsection{Concentratedness}
\label{sec:prelim_stacks_concentrated}

A morphism of algebraic stacks is called \textbf{concentrated} if it is quasi-compact, quasi-separated, and if the derived pushforward of any base change along a quasi-compact quasi-separated morphism has finite cohomological dimension. An algebraic stack is \textbf{concentrated} if it is quasi-compact and quasi-separated, and its structure morphism to $\operatorname{Spec}(\mathbb{Z})$ is concentrated. In fact, a quasi-compact quasi-separated algebraic stack $\mathcal{X}$ is concentrated if, and only if, any (hence all) of the following equivalent conditions hold: $\operatorname{Perf}(\mathcal{X}) = D_{\operatorname{qc}}(\mathcal{X})^c$, $\mathcal{O}_{\mathcal{X}}\in D_{\operatorname{qc}}(\mathcal{X})^c$, or $\operatorname{cd}(\mathcal{X})<\infty$. Here, the \textbf{cohomological dimension} of $\mathcal{X}$ is defined as
\begin{displaymath}
    \operatorname{cd}(\mathcal{X}) := \inf\left\{ d \mid \forall E\in \operatorname{Qcoh}(\mathcal{X})\ \text{and}\ n>d,\; H^n(\mathcal{X}, E) =0\right\}.
\end{displaymath}
See \cite[\S2, Proposition 4.5 and Remark 4.6]{Hall/Rydh:2017} for details.

\subsubsection{Perfect complexes}
\label{sec:prelim_stacks_perfects}

Perfect complexes are defined on any ringed site \cite[\href{https://stacks.math.columbia.edu/tag/08G4}{Tag 08G4}]{StacksProject}, for example on the lisse-\'{e}tale site of $\mathcal{X}$. A complex is \textbf{strictly perfect} if it is a bounded complex whose terms are direct summands of finite free modules; it is \textbf{perfect} if it is locally strictly perfect. Let $\operatorname{Perf}(\mathcal{X})$ denote the triangulated subcategory of $D_{\operatorname{qc}}(\mathcal{X})$ consisting of perfect complexes. In general, the compact objects of $D_{\operatorname{qc}}(\mathcal{X})$ are perfect complexes if $\mathcal{X}$ is quasi-compact and quasi-separated \cite[Lemma 4.4]{Hall/Rydh:2017}, although the converse need not hold. The two notions coincide precisely when the algebraic stack $\mathcal{X}$ is concentrated \cite[Lemma 4.4]{Hall/Rydh:2017}.

\section{Proof of results}
\label{sec:proof_of_results}

In this section, we prove our main results. To prove \Cref{thm:upper_bound}, we need a few lemmas. 

\begin{lemma}
    \label{lem:alt_Delta_dim}
    With notation of \Cref{def:diagonal_dimension}, if $\dim_\Delta (f)<\infty$, then it is equal to the smallest integer $n$ for which $\mathbf{R}\Delta_\ast \mathcal{O}_{\mathcal{Y}}\in\langle \mathbf{L}p_1^\ast G \otimes^{\mathbf{L}} \mathbf{L}p_2^\ast G \rangle_{n+1}$ for $G\in D^b_{\operatorname{coh}}(\mathcal{Y})$.
\end{lemma}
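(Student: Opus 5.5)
We compare the two minima directly; call the second one $m$. The inequality $\dim_\Delta(f)\leq m$ is immediate: the second condition is the special case $G_1=G_2=G$ of \Cref{def:diagonal_dimension}. So the content of the lemma is the reverse inequality $m\leq \dim_\Delta(f)$.

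Set $n=\dim_\Delta(f)<\infty$ and choose $G_1,G_2\in D^b_{\operatorname{coh}}(\mathcal{Y})$ with $\mathbf{R}\Delta_\ast\mathcal{O}_{\mathcal{Y}}\in\langle \mathbf{L}p_1^\ast G_1\otimes^{\mathbf{L}}\mathbf{L}p_2^\ast G_2\rangle_{n+1}$. Put $G:=G_1\oplus G_2$, which lies in $D^b_{\operatorname{coh}}(\mathcal{Y})$. Since $\mathbf{L}p_i^\ast$ and $\otimes^{\mathbf{L}}$ are additive, we get
\begin{displaymath}
\mathbf{L}p_1^\ast G\otimes^{\mathbf{L}}\mathbf{L}p_2^\ast G\cong \bigoplus_{i,j\in\{1,2\}}\mathbf{L}p_1^\ast G_i\otimes^{\mathbf{L}}\mathbf{L}p_2^\ast G_j .
\end{displaymath}
In particular $\mathbf{L}p_1^\ast G_1\otimes^{\mathbf{L}}\mathbf{L}p_2^\ast G_2$ is a direct summand of $\mathbf{L}p_1^\ast G\otimes^{\mathbf{L}}\mathbf{L}p_2^\ast G$.

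Next we use monotonicity of the levels. If $A$ is a direct summand of $B$, then $A\in\operatorname{add}(B)=\langle B\rangle_1$, so $\langle A\rangle_1\subseteq\langle B\rangle_1$. By induction on $k$, using the inductive definition of $\langle -\rangle_k$ in \Cref{sec:preliminaries_generation}, this gives $\langle A\rangle_k\subseteq\langle B\rangle_k$ for all $k$: a cone of a morphism from $\langle A\rangle_{k-1}$ to $\langle A\rangle_1$ is also a cone of a morphism from $\langle B\rangle_{k-1}$ to $\langle B\rangle_1$, and $\operatorname{add}$ is monotone. Hence
\begin{displaymath}
\mathbf{R}\Delta_\ast\mathcal{O}_{\mathcal{Y}}\in\langle \mathbf{L}p_1^\ast G\otimes^{\mathbf{L}}\mathbf{L}p_2^\ast G\rangle_{n+1},
\end{displaymath}
so $m\leq n$, and the two minima agree.

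There is no serious obstacle here. The only points to check are that the direct sum decomposition of the tensor product holds in $D_{\operatorname{qc}}(\mathcal{Y}\times_{\mathcal{X}}\mathcal{Y})$, which follows from additivity of the exact functors involved, and that the level inclusions are justified inductively.
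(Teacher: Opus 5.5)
Your proposal is correct and follows the paper's proof: the easy inequality from specializing $G_1=G_2=G$, then setting $G:=G_1\oplus G_2$ and using that $\mathbf{L}p_1^\ast G_1\otimes^{\mathbf{L}}\mathbf{L}p_2^\ast G_2$ lies in $\langle \mathbf{L}p_1^\ast G\otimes^{\mathbf{L}}\mathbf{L}p_2^\ast G\rangle_1$. The only addition is that you spell out the inductive monotonicity of the levels $\langle - \rangle_k$, which the paper uses implicitly.
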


\begin{proof}
    Set $m:=\dim_\Delta (f)$. Clearly, $m\leq n$. Hence, it suffices to show the reverse inequality. Suppose that $\mathbf{R}\Delta_\ast \mathcal{O}_{\mathcal{Y}}\in\langle\mathbf{L}p_1^\ast G_1 \otimes^{\mathbf{L}} \mathbf{L}p_2^\ast G_2 \rangle_{m+1}$ for some $G_1,\ G_2\in D^b_{\operatorname{coh}}(\mathcal{Y})$. Then $\mathbf{L}p_1^\ast G_1 \otimes^{\mathbf{L}} \mathbf{L}p_2^\ast G_2\in \langle \mathbf{L}p_1^\ast G \otimes^{\mathbf{L}} \mathbf{L}p_2^\ast G \rangle_1$ where $G:=G_1 \oplus G_2$. Consequently, $\mathbf{R}\Delta_\ast \mathcal{O}_{\mathcal{Y}}\in\langle \mathbf{L}p_1^\ast G_1 \otimes^{\mathbf{L}} \mathbf{L}p_2^\ast G_2 \rangle_{m+1}$ implies $\mathbf{R}\Delta_\ast \mathcal{O}_{\mathcal{Y}}\in\langle \mathbf{L}p_1^\ast G \otimes^{\mathbf{L}} \mathbf{L}p_2^\ast G \rangle_{m+1}$, and hence, $n\leq m$.
\end{proof}

\begin{remark}
    With \Cref{lem:alt_Delta_dim} in mind, we can now replace the $G_i$ in \Cref{def:diagonal_dimension} by a single object. 
\end{remark}

\begin{lemma}
    \label{lem:spectral_sequence}
    Let $\mathcal{X}$ be a quasi-compact quasi-separated algebraic stack. If $L\in D^+_{\operatorname{qc}}(\mathcal{X})$ and $K$ is pseudocoherent, then there exists a spectral sequence
    \begin{displaymath}
        E^{p,q}_2 := H^p (\mathcal{X}, \mathcal{H}^q (\operatorname{\mathbf{R}\mathcal{H}\! \mathit{om}} (K,L)))
    \end{displaymath}
    which converges to $\operatorname{Ext}^{p+q}(K,L)$.
\end{lemma}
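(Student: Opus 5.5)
The plan is to obtain the spectral sequence as the Grothendieck (hypercohomology) spectral sequence for the composite of derived functors
\begin{displaymath}
    \operatorname{RHom}(K,L) \cong \mathbf{R}\Gamma(\mathcal{X}, \operatorname{\mathbf{R}\mathcal{H}\! \mathit{om}}(K,L)),
\end{displaymath}
where the right side is computed in $D_{\operatorname{qc}}(\mathcal{X})$. Once we have this identification, the spectral sequence with $E_2^{p,q}=H^p(\mathcal{X},\mathcal{H}^q(-))$ is the standard hypercohomology spectral sequence of the complex $M:=\operatorname{\mathbf{R}\mathcal{H}\! \mathit{om}}(K,L)$. Its convergence needs $M$ to be bounded below, because $\mathcal{X}$ is only quasi-compact and quasi-separated and we cannot assume finite cohomological dimension.

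\textbf{Step 1: comparing the two internal homs.} Because $K$ is pseudocoherent and $L\in D^+_{\operatorname{qc}}(\mathcal{X})$, the unbounded internal hom $\operatorname{\mathbb{R}\mathcal{H}\! \mathit{om}}(K,L)$ computed in $D(\mathcal{X})$ should already have quasi-coherent cohomology. It should therefore agree with $\operatorname{\mathbf{R}\mathcal{H}\! \mathit{om}}(K,L)$, which is obtained by applying the quasi-coherator to it. To check this I would work smooth-locally on an affine atlas $U\to\mathcal{X}$ and use the results of Hall--Rydh and Hall--Neeman--Rydh on comparing $D(\mathcal{X})$ with $D_{\operatorname{qc}}(\mathcal{X})$. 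Write $K$ locally as a bounded-above complex of finite free modules, truncated in a large range, and $L$ as bounded below. Then $\mathcal{H}^q$ of the internal hom in a given degree involves only finitely many terms of $K$, and for these the formation of the internal hom commutes with flat base change. This makes $M$ quasi-coherent and bounded below, with the lower bound determined by the lower bound of $L$ and the upper bound of $K$. I expect this step to be the main obstacle, since pseudocoherent complexes are unbounded below and the lisse-\'etale site is not functorial.

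\textbf{Step 2: adjunction.} Taking global sections of the adjunction in $D(\mathcal{X})$ gives $\operatorname{Hom}(K,L[n])\cong H^n(\mathcal{X},\operatorname{\mathbb{R}\mathcal{H}\! \mathit{om}}(K,L))$. By Step 1 this equals $H^n(\mathcal{X},M)$. Here $H^n(\mathcal{X},-)$ means cohomology on the lisse-\'etale site, and for objects of $D^+_{\operatorname{qc}}$ it agrees with the cohomology of $D_{\operatorname{qc}}$.

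\textbf{Step 3: the spectral sequence.} Choose a Cartan--Eilenberg resolution of the bounded-below complex $M$ by injective $\mathcal{O}_{\mathcal{X}}$-modules and apply $\Gamma(\mathcal{X},-)$. Filtering the resulting double complex by the $q$-degree gives the spectral sequence $E_2^{p,q}=H^p(\mathcal{X},\mathcal{H}^q(M))\Rightarrow H^{p+q}(\mathcal{X},M)$. Since $M$ is bounded below and $p\ge0$, the double complex is supported in a region where each total degree meets only finitely many terms, so the sequence converges. Combining this with Step 2 identifies the abutment with $\operatorname{Ext}^{p+q}(K,L)$.
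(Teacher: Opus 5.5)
Your proposal is correct and follows essentially the same route as the paper: show $\operatorname{\mathbb{R}\mathcal{H}\! \mathit{om}}(K,L)\in D^+_{\operatorname{qc}}(\mathcal{X})$ by a smooth-local check, identify $\operatorname{Ext}^{n}(K,L)$ with $H^n(\mathbf{R}\Gamma(\mathcal{X},\operatorname{\mathbb{R}\mathcal{H}\! \mathit{om}}(K,L)))$, and apply the Cartan--Eilenberg hypercohomology spectral sequence for a bounded-below complex. You also spell out two points the paper leaves implicit: the agreement of this internal hom with the quasi-coherent $\operatorname{\mathbf{R}\mathcal{H}\! \mathit{om}}(K,L)$ appearing in the statement, and convergence coming from boundedness below.
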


\begin{proof}
    Starting from the left exact functor
    \begin{displaymath}
        \Gamma(\mathcal{X},-)\colon \operatorname{Mod}(\mathcal{X})\to \operatorname{Mod}(\Gamma(\mathcal{X},\mathcal{O}_{\mathcal{X}})),
    \end{displaymath}
    we obtain 
    \begin{displaymath}
        \mathbf{R}\Gamma(\mathcal{X},-)\colon D (\mathcal{X}) \to D (\Gamma(\mathcal{X},\mathcal{O}_{\mathcal{X}})).
    \end{displaymath}
    See \cite[\href{https://stacks.math.columbia.edu/tag/071J}{Tag 071J} \& \href{https://stacks.math.columbia.edu/tag/07A5}{07A5}]{StacksProject}. Note that $H^i (\mathcal{X},-) := \mathbf{R}^i \Gamma(\mathcal{X},-)$. Since $L\in D^+_{\operatorname{qc}}(\mathcal{X})$ and $K$ is pseudocoherent, it follows that $\operatorname{\mathbb{R}\mathcal{H}\! \mathit{om}}(K,L)\in D^+_{\operatorname{qc}}(\mathcal{X})$. Indeed, this can be checked smooth locally with \cite[\href{https://stacks.math.columbia.edu/tag/0A6H}{Tag 0A6H}]{StacksProject}. Recall that 
    \begin{displaymath}
        \operatorname{Ext}^p (K,L) = H^p (\mathbf{R}\Gamma(\mathcal{X},\mathbb{R}\mathcal{H}\! \mathit{om} (K,L))).
    \end{displaymath}
    See \cite[\href{https://stacks.math.columbia.edu/tag/0B6E}{Tag 0B6E}]{StacksProject}. As $\operatorname{Mod}(\mathcal{X})$ is a Grothendieck abelian category with enough injectives, $\operatorname{\mathbb{R}\mathcal{H}\! \mathit{om}}(K,L)$ admits a Cartan--Eilenberg resolution (see \cite[\href{https://stacks.math.columbia.edu/tag/07A5}{Tags 07A5} and \href{https://stacks.math.columbia.edu/tag/015I}{015I}]{StacksProject}). Therefore, by \cite[\href{https://stacks.math.columbia.edu/tag/015J}{Tag 015J}]{StacksProject}, the desired spectral sequence exists.
\end{proof}

\begin{lemma}
    \label{lem:exi^q_vanishing_dim_U}
    Let $R$ be a regular ring of finite Krull dimension, $\mathcal{X}$ be a smooth algebraic stack with finite diagonal over $R$, and $\Delta\colon\mathcal{X}\to\mathcal{X}\times_R\mathcal{X}$ be the diagonal morphism. If $E$ is a coherent sheaf on $\mathcal{X}\times_R\mathcal{X}$, then $\operatorname{\mathcal{E}\! \mathit{xt}}^q(\Delta_\ast \mathcal{O_X},E)=0$ for all $q>\dim(g)$ where $g\colon \mathcal{X}\to \operatorname{Spec}(R)$ is the structure morphism.
\end{lemma}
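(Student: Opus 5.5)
The claim is smooth-local on $\mathcal{X}\times_R\mathcal{X}$, so the plan is to reduce to a local statement and then use that the diagonal of a smooth morphism is a regular immersion. Since $\Delta$ is finite (in particular affine and proper) and $\mathcal{X}$ is Noetherian, $\Delta_\ast\mathcal{O}_\mathcal{X}$ is a coherent sheaf on $\mathcal{X}\times_R\mathcal{X}$. Hence $\operatorname{\mathbf{R}\mathcal{H}\! \mathit{om}}(\Delta_\ast\mathcal{O}_\mathcal{X},E)$ lies in $D^+_{\operatorname{coh}}$. Its cohomology sheaves $\operatorname{\mathcal{E}\! \mathit{xt}}^q$ are compatible with flat pullback: for pseudocoherent first argument, formation of $\operatorname{\mathbf{R}\mathcal{H}\! \mathit{om}}$ commutes with flat base change, as used in \Cref{lem:spectral_sequence} via Tag 0A6H. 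So it suffices to show vanishing after pulling back along a smooth cover $W\to\mathcal{X}\times_R\mathcal{X}$ by a scheme, and vanishing can moreover be checked at stalks.

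First step: choose a smooth atlas $U\to\mathcal{X}$ with $U$ a scheme and take $W:=U\times_R U$, smooth over $\mathcal{X}\times_R\mathcal{X}$. The base change of $\Delta$ along $W\to\mathcal{X}\times_R\mathcal{X}$ is $U\times_{\mathcal{X}}U\to U\times_R U$. This is a finite morphism of algebraic spaces; since $\Delta$ has finite diagonal-type hypotheses here, $U\times_\mathcal{X}U$ is an algebraic space. The pullback of $\Delta_\ast\mathcal{O}_\mathcal{X}$ is $\mathcal{O}_{U\times_\mathcal{X}U}$ pushed forward, by flat base change for the affine morphism $\Delta$. Thus we must bound the $\operatorname{\mathcal{E}\! \mathit{xt}}^q$ of $\mathcal{O}_Z$ on $W$, where $Z=U\times_{\mathcal{X}}U$.

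Second step, the local algebra: $Z\to W$ is finite, and $Z$ is smooth over $U$ via either projection, with relative dimension equal to that of $U\to\mathcal{X}$, say $r$ at the relevant points. Meanwhile $W$ is smooth over $U$ of relative dimension $\dim(g)+r$. For a closed immersion of smooth $U$-schemes (say after passing to a factorisation), the codimension at a point is $(\dim(g)+r)-r=\dim(g)$. I expect to show $Z\to W$ is a local complete intersection of codimension $\dim(g)$, so $\mathcal{O}_Z$ is locally resolved by a Koszul complex of length $\dim(g)$. Then $\operatorname{\mathcal{E}\! \mathit{xt}}^q(\mathcal{O}_Z,E)=0$ for $q>\dim(g)$ for any coherent $E$ (indeed any module), since the Koszul complex computes $\operatorname{\mathbf{R}\mathcal{H}\! \mathit{om}}$ locally.

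Main obstacle: $Z\to W$ is finite and unramified, but not necessarily a closed immersion when stabilizers are nontrivial. It is finite unramified, which gives étale-locally a closed immersion on $Z$, namely a disjoint union of closed immersions locally on $W$. Concretely, I would use that a finite unramified morphism is étale-locally on the target a finite disjoint union of closed immersions (Stacks Project). I then need these closed immersions to be between $U$-smooth schemes. The pieces are open in $Z$, so they are smooth over $U$ via $p_1$. A closed immersion of smooth $U$-schemes is a regular immersion of codimension equal to the difference of relative dimensions, which is $\dim(g)$ as computed above. Dimension bookkeeping must be pointwise, using the local constancy from the earlier remark, and yields codimension $\le\dim(g)$ everywhere, which suffices. Étale pullback commutes with $\operatorname{\mathcal{E}\! \mathit{xt}}$, and the $\operatorname{\mathcal{E}\! \mathit{xt}}$ of a finite direct sum of structure sheaves splits. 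Combining these gives the vanishing.
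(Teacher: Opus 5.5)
Your reduction to $j\colon Z=U\times_{\mathcal{X}}U\to W=U\times_R U$ is fine, and so is your relative-dimension count. The step that fails is ``$Z\to W$ is finite unramified, hence \'{e}tale-locally a disjoint union of closed immersions''. The map $Z\to W$ is a base change of $\Delta$, and the diagonal of an algebraic stack is unramified exactly when the stack is Deligne--Mumford. Finite diagonal does not give this.

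A concrete counterexample: let $k$ have characteristic $p$ and take $\mathcal{X}=B\mu_p\cong[\mathbb{G}_m/\mathbb{G}_m]$ with action $t\cdot x=t^px$. This stack is smooth over $k$ with finite diagonal, and it is even tame. For the atlas $U=\mathbb{G}_m$, the map $Z\to W$ is $(t,x)\mapsto(x,t^px)$. It is finite flat of degree $p$ and purely inseparable, with non-reduced geometric fibres. So there are no closed immersions to find and no Koszul complex on $W$ resolving $j_\ast\mathcal{O}_Z$. Changing the atlas does not help, since the nonempty geometric fibres of $Z\to W$ are torsors under the non-reduced stabilizer $\mu_p$.

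This is not a marginal case. \Cref{cor:compare_diagonal_morphism}(2) applies \Cref{thm:upper_bound}, and hence this lemma, to tame stacks in positive characteristic whose stabilizers are linearly reductive group schemes such as $\mu_p$. As written, your argument proves the lemma only for Deligne--Mumford $\mathcal{X}$.

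The missing idea is to use regularity instead of unramifiedness. $Z$ and $W$ are regular, being smooth over the regular scheme $U$, and $j$ is finite.

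\begin{enumerate}
\item At $v\in W$, the stalk $B=(j_\ast\mathcal{O}_Z)_v$ is a finite module over the regular local ring $A=\mathcal{O}_{W,v}$, so it has finite projective dimension.
\item The localizations $B_i=\mathcal{O}_{Z,z_i}$ at the points over $v$ are regular, so $\operatorname{depth}_A B=\min_i\dim B_i$.
\item Auslander--Buchsbaum then gives $\operatorname{pd}_A B=\max_i(\dim A-\dim B_i)$.
\item Flatness of $Z$ and $W$ over $U$, together with finiteness of the residue field extensions $\kappa(z_i)/\kappa(v)$, turns $\dim A-\dim B_i$ into a difference of local fibre dimensions. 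That difference is exactly your count $(\dim(g)+r)-r$, so it is at most $\dim(g)$.
\end{enumerate}

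This is the paper's argument. It handles non-reduced stabilizers uniformly: for $B\mu_p$ it recovers that $j_\ast\mathcal{O}_Z$ is locally free, i.e.\ projective dimension $0=\dim(g)$. The regular-immersion/Koszul route, by contrast, needs the diagonal to be unramified.
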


\begin{proof}
    Choose a smooth presentation $s\colon U\to\mathcal{X}$. Consider the following fibred square
    \begin{displaymath}
        \begin{tikzcd}
            {W} & {V} \\
            {\mathcal{X}} & {\mathcal{X}\times_{\operatorname{Spec}(R)} \mathcal{X}.}
            \arrow["j", from=1-1, to=1-2]
            \arrow[from=1-1, to=2-1]
            \arrow["s\times_R s", from=1-2, to=2-2]
            \arrow["\Delta"', from=2-1, to=2-2]
        \end{tikzcd}
    \end{displaymath}
    where $V=U\times_R U$ and $W=U\times_\mathcal{X}U$. Note that $j$ is a finite morphism between regular schemes. Indeed, the composition $U\to\mathcal{X}\to\operatorname{Spec}(R)$ is smooth and $R$ is regular. It follows that $U$ and $W$ are both regular. Moreover, the fibre product $\mathcal{X}\times_R\mathcal{X}$ is also regular. Since $s\times_R s$ is smooth, we see that $V$ is also regular.
    
    We claim that $\operatorname{\mathcal{E}\! \mathit{xt}}^q(j_\ast \mathcal{O}_W, G)=0$ if $q>\dim(g)$. We first show that the Lemma follows from our claim. Note that $\Delta_\ast \mathcal{O_X}$ is perfect because it is coherent and $\mathcal{X}\times_R\mathcal{X}$ is regular. By flat base change and our claim, we have
    \begin{align*}
        (s\times_Rs)^\ast \operatorname{\mathcal{E}\! \mathit{xt}}^q(\Delta_\ast \mathcal{O_X},E)&\simeq \mathbf{L}(s\times_Rs)^\ast \operatorname{\mathcal{E}\! \mathit{xt}}^q(\Delta_\ast \mathcal{O_X},E)\\
        &\simeq\operatorname{\mathcal{E}\! \mathit{xt}}^q(\mathbf{L}(s\times_Rs)^\ast \Delta_\ast \mathcal{O_X},\mathbf{L}(s\times_Rs)^\ast E)\\
        &\simeq\operatorname{\mathcal{E}\! \mathit{xt}}^q(j_\ast \mathcal{O}_W,\mathbf{L}(s\times_Rs)^\ast E)\\
        &=0
    \end{align*}
    Since $s\times_Rs$ is faithfully flat, we see that $\operatorname{\mathcal{E}\! \mathit{xt}}^q(\Delta_\ast \mathcal{O_X},E)=0$ as desired.
    
    We now prove our claim. Let $G$ be a coherent sheaf on $V$. Let $v$ be a closed point of $V$. Consider $\operatorname{\mathcal{E}\! \mathit{xt}}^q(j_\ast \mathcal{O}_W,G)_v\cong\operatorname{\mathcal{E}\! \mathit{xt}}^q((j_\ast \mathcal{O}_W)_v,G_v)$. Note that $(j_\ast \mathcal{O}_W)_v$ is a finite $\mathcal{O}_{V,v}$-module. Let $A=\mathcal{O}_{V,v}$ and $B=(j_\ast \mathcal{O}_W)_v$. If $v$ is not in the image of $j$, then $B=0$ and there is nothing to prove. Otherwise, there are finitely many closed points $w_i,\ldots,w_n$ of $W$ lying over $v$. By assumption, $A$ is a regular local ring and $B$ is a finite semilocal $A$-algebra. In particular, $B$ has finite projective dimension as an $A$-module. By Auslander-Buchsbaum formula \cite[\href{https://stacks.math.columbia.edu/tag/090V}{Tag 090V}]{StacksProject}, we have $\operatorname{pd}_{A}(B)=\operatorname{depth}(A)-\operatorname{depth}_A(B)$. Since $V$ is regular, we see that $\operatorname{depth}(A)=\dim(A)$. For $\operatorname{depth}_AB$, let $B_i=\mathcal{O}_{W,w_i}$ for every $1\leq i\leq n$. Since $W$ is regular, we see that each $B_i$ is a regular local ring and $\operatorname{depth}(B_i)=\dim(B_i)$. By \cite[\href{https://stacks.math.columbia.edu/tag/0AUK}{Tag 0AUK}]{StacksProject}, we have $\operatorname{depth}_AB=\min_{1\leq i\leq n}\dim(B_i)$. It follows that $\operatorname{pd}_A(B)=\max_{1\leq i\leq n}\{\dim(A)-\dim(B_i)\}$. Let $u=t(v)=r(w_{i})$ in $U$ where $t\colon V\to U$ and $r\colon W\to U$ are the canonical projections. Note that both $t$ and $r$ are flat. This implies that $\dim(A)=\dim (\mathcal{O}_{U,u})+\dim(\mathcal{O}_{V_u,v})$ where $V_u$ is the fibre of $u$ in $V$. Similarly, we have $\dim(B_i)=\dim (\mathcal{O}_{U,u})+\dim(\mathcal{O}_{W_u,w_i})$ where $W_u$ is the fibre of $u$ in $W$. Moreover, let $x=s(u)$. It follows that
    \begin{align*}
        \operatorname{pd}_A(B)&=\max_{1\leq i\leq n}\{\dim(A)-\dim(B_i)\}\\
        &=\max_{1\leq i\leq n}\{\dim(\mathcal{O}_{V_u,v})-\dim(\mathcal{O}_{W_u,w_i})\}\\
        &=\dim_x(\mathcal{X}_{g(x)})\\
        &\leq\dim(g).
    \end{align*}
    Note that the second equality follows from applying \cite[\href{https://stacks.math.columbia.edu/tag/0DRP}{Tag 0DRP}]{StacksProject} to the smooth morphism $V_u\to\mathcal{X}_{\kappa(u)}$ whose fibre at $\operatorname{Spec}(\kappa(u))\to\mathcal{X}_x$ is precisely $W_u$. This implies that $\operatorname{\mathcal{E}\! \mathit{xt}}^q(j_\ast \mathcal{O}_W,G)_v=0$ if $q>\dim(g)$. Since this is true for every closed point $v$ of $V$, we have $\operatorname{\mathcal{E}\! \mathit{xt}}^q(j_\ast \mathcal{O}_W,G)=0$ if $q>\dim(g)$. This finishes the proof.
\end{proof}

\begin{lemma}\label{lem:exi^D_vanishing_regular_finite_Krull_dimension}
    Let $R$ be a regular ring of finite Krull dimension, $g\colon \mathcal{X} \to \operatorname{Spec}(R)$ be a smooth morphism from a concentrated algebraic stack with finite diagonal, and $\Delta\colon\mathcal{X}\to\mathcal{X}\times_R\mathcal{X}$ be the diagonal morphism. Set $D=\dim(g)+\operatorname{cd}(\mathcal{X})$. If $E$ is coherent sheaf on $\mathcal{X}\times_R\mathcal{X}$, then $\operatorname{Ext}^n(\Delta_\ast \mathcal{O_X},E)=0$ for all $n>D$. Moreover, if $K\in D^b_{\operatorname{coh}}(\mathcal{X}\times_R\mathcal{X})^{\leq 0}$, then $\operatorname{Hom}(\mathbf{R}\Delta_\ast \mathcal{O_X},K[n])=0$ for every $n>D$.
\end{lemma}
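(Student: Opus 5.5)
Apply the spectral sequence of \Cref{lem:spectral_sequence} on $\mathcal{X}\times_R\mathcal{X}$ with $K=\Delta_\ast\mathcal{O}_{\mathcal{X}}$ and $L=E$. Since $\Delta$ is finite, hence affine, $\mathbf{R}\Delta_\ast\mathcal{O}_{\mathcal{X}}=\Delta_\ast\mathcal{O}_{\mathcal{X}}$ is a coherent sheaf, so $K$ is pseudocoherent; $E$ is bounded below. The spectral sequence is
\begin{displaymath}
    E_2^{p,q}=H^p\bigl(\mathcal{X}\times_R\mathcal{X},\operatorname{\mathcal{E}\! \mathit{xt}}^q(\Delta_\ast\mathcal{O}_{\mathcal{X}},E)\bigr)\Rightarrow \operatorname{Ext}^{p+q}(\Delta_\ast\mathcal{O}_{\mathcal{X}},E).
\end{displaymath}
By \Cref{lem:exi^q_vanishing_dim_U}, $E_2^{p,q}=0$ for $q>\dim(g)$. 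It therefore suffices to show $E_2^{p,q}=0$ for $p>\operatorname{cd}(\mathcal{X})$; then every term on the line $p+q=n>D$ vanishes, and so does the abutment.

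For the $p$-bound, note that each $\operatorname{\mathcal{E}\! \mathit{xt}}^q(\Delta_\ast\mathcal{O}_{\mathcal{X}},E)$ is a quasi-coherent (indeed coherent) sheaf supported on the image of $\Delta$. I would show it is of the form $\Delta_\ast F_q$ for a quasi-coherent $F_q$ on $\mathcal{X}$. The key point is that it is a $\Delta_\ast\mathcal{O}_{\mathcal{X}}$-module, via the action of $\mathcal{H}om(\Delta_\ast\mathcal{O}_{\mathcal{X}},\Delta_\ast\mathcal{O}_{\mathcal{X}})$ on the first argument. Since $\Delta$ is finite, hence affine, $\Delta_\ast$ identifies quasi-coherent $\mathcal{O}_{\mathcal{X}}$-modules with quasi-coherent $\Delta_\ast\mathcal{O}_{\mathcal{X}}$-modules. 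Then
\begin{displaymath}
    H^p(\mathcal{X}\times_R\mathcal{X},\Delta_\ast F_q)=H^p(\mathcal{X},F_q),
\end{displaymath}
because $\mathbf{R}\Delta_\ast=\Delta_\ast$ on quasi-coherent sheaves for affine $\Delta$. This group vanishes for $p>\operatorname{cd}(\mathcal{X})$, which is finite since $\mathcal{X}$ is concentrated. I expect the main obstacle to be this step: justifying the module structure and the affine-pushforward identification on the lisse-\'etale site. Both can be checked smooth locally, as in the proof of \Cref{lem:exi^q_vanishing_dim_U}.

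For the second assertion, take $K\in D^b_{\operatorname{coh}}(\mathcal{X}\times_R\mathcal{X})^{\leq 0}$ and induct on the number of nonzero cohomology sheaves, using the truncation triangles $\tau^{<b}K\to K\to\mathcal{H}^b(K)[-b]\to$. For a single sheaf in degree $-b\le 0$,
\begin{displaymath}
    \operatorname{Hom}(\Delta_\ast\mathcal{O}_{\mathcal{X}},\mathcal{H}(K)[-b+n])=\operatorname{Ext}^{n-b}(\Delta_\ast\mathcal{O}_{\mathcal{X}},\mathcal{H}(K)),
\end{displaymath}
and $n-b\geq n>D$ (as $-b\le 0$, i.e.\ the shift only raises the degree), so this vanishes by the first part. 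The long exact $\operatorname{Hom}$ sequences of the truncation triangles then give $\operatorname{Hom}(\mathbf{R}\Delta_\ast\mathcal{O}_{\mathcal{X}},K[n])=0$ for all $n>D$; the induction terminates because $K$ is bounded.
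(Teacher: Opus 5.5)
Your proposal is correct and follows the paper's proof essentially step for step: the same local-to-global spectral sequence, the $q$-bound from \Cref{lem:exi^q_vanishing_dim_U}, and the $p$-bound via $\operatorname{\mathcal{E}\! \mathit{xt}}^q(\Delta_\ast\mathcal{O}_{\mathcal{X}},E)\cong\Delta_\ast F_q$. For the second assertion the paper uses the hypercohomology spectral sequence $\operatorname{Ext}^p(\mathbf{R}\Delta_\ast\mathcal{O}_{\mathcal{X}},\mathcal{H}^q(K))\Rightarrow\operatorname{Hom}(\mathbf{R}\Delta_\ast\mathcal{O}_{\mathcal{X}},K[p+q])$, which is equivalent to your truncation-triangle induction; note one sign slip there: $\mathcal{H}^b(K)$ sits in degree $b\le 0$, so $n-b\ge n$ follows from $b\le 0$ (i.e.\ $-b\ge 0$), not from $-b\le 0$ as you wrote.
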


\begin{proof}
    Note that $D<\infty$ by assumption. By \Cref{lem:spectral_sequence}, we have the following local-to-global spectral sequence
    \begin{displaymath}
        E^{p,q} := H^p (\mathcal{X}\times_R\mathcal{X}, \operatorname{\mathcal{E}\! \mathit{xt}}^q (\Delta_\ast \mathcal{O_X},E))\implies\operatorname{Ext}^{p+q}(\Delta_\ast \mathcal{O_X},E).
    \end{displaymath}
    By \Cref{lem:exi^q_vanishing_dim_U}, we have $\operatorname{\mathcal{E}\! \mathit{xt}}^q (\Delta_\ast \mathcal{O_X},E)=0$ if $q>\dim(g)$. Moreover, we have $H^p (\mathcal{X}\times_R\mathcal{X}, \operatorname{\mathcal{E}\! \mathit{xt}}^q (\Delta_\ast \mathcal{O_X},E))=0$ if $p>\operatorname{cd}(\mathcal{X})$. Indeed, $\operatorname{\mathcal{E}\! \mathit{xt}}^q (\Delta_\ast \mathcal{O_X},E)$ has a natural $\Delta_\ast \mathcal{O_X}$-module structure and it is coherent because $\Delta_\ast \mathcal{O_X}$ is perfect and $E$ is coherent. Since $\Delta$ is affine, there exists a coherent sheaf $F_q$ on $\mathcal{X}$ such that $\operatorname{\mathcal{E}\! \mathit{xt}}^q (\Delta_\ast \mathcal{O_X},E)\cong\Delta_\ast F_q$. Therefore, we have
    \begin{displaymath}
        H^p (\mathcal{X}\times_R\mathcal{X}, \operatorname{\mathcal{E}\! \mathit{xt}}^q (\Delta_\ast \mathcal{O_X},E))\cong H^p (\mathcal{X}\times_R\mathcal{X}, \Delta_\ast F_q)\cong H^p (\mathcal{X}, F_q).
    \end{displaymath}
    Therefore, we see that these cohomology groups vanish if $p>\operatorname{cd}(\mathcal{X})$ as desired. It follows that $\operatorname{Ext}^{n}(\Delta_\ast \mathcal{O_X},E)=0$ if $n>\operatorname{cd}(\mathcal{X})+\dim(g)$ as desired. 

    Let $K\in D^b_{\operatorname{coh}}(\mathcal{X}\times_R\mathcal{X})^{\leq 0}$. Consider the following hypercohomology spectral sequence
    \begin{displaymath}
        E_2^{p,q}=\operatorname{Ext}^p(\mathbf{R}\Delta_\ast \mathcal{O_X},\mathcal{H}\!^q(K))\implies\operatorname{Hom}(\mathbf{R}\Delta_\ast \mathcal{O_X},K[p+q]).
    \end{displaymath}
    Suppose $n=p+q>D$. Since $K\in D^b_{\operatorname{coh}}(\mathcal{X}\times_R\mathcal{X})^{\leq 0}$, we see that $\mathcal{H}\!^q(K)$ is nonzero only when $q\leq 0$. Therefore we have $p=n-q>D$. But in this case, we have $\operatorname{Ext}^p(\mathbf{R}\Delta_\ast \mathcal{O_X},E)=0$ for every coherent sheaf $E$ on $\mathcal{X}\times_R\mathcal{X}$. The spectral sequence then tells us that $\operatorname{Hom}(\mathbf{R}\Delta_\ast \mathcal{O_X},K[n])=0$ as desired.
\end{proof}

\begin{example}
    \label{ex:cohomological_dimension_good_moduli_space}
    Let $\mathcal{X}$ be a quasi-compact and quasi-separated algebraic stack. Let $\pi \colon \mathcal{X}\to X$ be a good moduli space. If $\dim X \leq d$, then $H^n (\mathcal{X},E)\cong 0$ for all $n > \dim X$ and $E\in \operatorname{Qcoh}(\mathcal{X})$. In other words, $\operatorname{cd}(\mathcal{X})\leq d$. Indeed, there is an isomorphism $H^n (\mathcal{X},E)\cong H^n (X,\pi_\ast E)$ (see e.g.\ \cite[Eq.\ 1.10]{Hall/Rydh:2017}). By \cite[\href{https://stacks.math.columbia.edu/tag/0A4R}{Tag 0A4R}]{StacksProject}, $H^n (X,\pi_\ast E) = 0$ for $n > \dim X$. 
\end{example}

\begin{lemma}
    \label{lem:big_to_small_generation}
    Let $\mathcal{X}$ be a concentrated Noetherian algebraic stack with quasi-finite and separated diagonal.
    If $E\in\overline{\langle G \rangle}_n$ for some $E,G\in D^b_{\operatorname{coh}}(\mathcal{X})$ and $n\geq 0$, then $E\in \langle G \rangle_n$.
\end{lemma}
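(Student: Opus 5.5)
The plan is to run Rouquier's compactness argument (the ``big to small'' comparison of \cite[Proposition 3.13]{Rouquier:2008}) with a relative form of compactness. Objects of $D^b_{\operatorname{coh}}(\mathcal{X})$ are generally not compact in $D_{\operatorname{qc}}(\mathcal{X})$, so the replacement I intend to establish first is the following claim.

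\emph{Relative compactness.} For $F\in D^b_{\operatorname{coh}}(\mathcal{X})$ and $a\in\mathbb{Z}$, the functor $\operatorname{Hom}(F,-)$ commutes with small coproducts of families lying in $D_{\operatorname{qc}}(\mathcal{X})^{\geq a}$.

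This is where the hypotheses enter. Since $\mathcal{X}$ is concentrated with quasi-finite separated diagonal, Hall--Rydh give that $D_{\operatorname{qc}}(\mathcal{X})$ is compactly generated by $\operatorname{Perf}(\mathcal{X})=D_{\operatorname{qc}}(\mathcal{X})^c$. They also give approximation of pseudocoherent complexes by perfect ones: for each $N$ there is a perfect $P$ and a map $P\to F$ whose cone lies in $D^{\leq a-N}$. Choosing $N$ larger than $\operatorname{cd}(\mathcal{X})$ plus a margin, the cone has no maps to $D^{\geq a}$ in the relevant degrees. Hence $\operatorname{Hom}(F,-)$ agrees with $\operatorname{Hom}(P,-)$ on the family and on its coproduct (which again lies in $D^{\geq a}$), and the latter commutes with coproducts because $P$ is compact.

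Next I handle arbitrary shifts, since $\operatorname{Add}(G)$ allows unbounded shifts. Fix $E\in D^b_{\operatorname{coh}}$ concentrated in degrees $[e_0,e_1]$, and let $G$ live in $[g_0,g_1]$. By \Cref{lem:spectral_sequence} and the finite cohomological dimension, $\operatorname{Hom}(E,G[i])=0$ for $i>g_1-e_0+\operatorname{cd}(\mathcal{X})$ up to a harmless constant. The vanishing should hold even for coproducts of such shifts, using the same spectral sequence together with the bound on $\operatorname{cd}$. Now split a coproduct $\bigoplus_j G[i_j]$ into three parts:
\begin{itemize}
\item the summands with very positive shifts $i_j$ (sitting in very negative degrees), which receive no nonzero maps from $E$;
\item the summands with very negative shifts, which lie in $D^{\geq a}$ for $a$ large and receive no nonzero maps from $E$ for degree reasons;
\item the middle range, which lies in a uniform $D^{\geq a}$, so the relative compactness claim applies.
\end{itemize}
Consequently every map from $E$ (or any object of $D^b_{\operatorname{coh}}$) into an object of $\operatorname{Add}(G)$ factors through an object of $\operatorname{add}(G)$, namely a finite subsum.

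Then I induct on $n$ exactly as in Rouquier's argument. Suppose $E$ is a summand of $C$, where there is a triangle $A\to C\to B\to A[1]$ with $B\in\overline{\langle G\rangle}_{n-1}$ and $A\in\operatorname{Add}(G)$ (the order is rearranged as convenient). The composite $E\to C\to B$ is handled by the inductive hypothesis: apply it to maps $E\to B$, factoring them through an object of $\langle G\rangle_{n-1}$. This requires the stronger inductive statement that every map from an object of $D^b_{\operatorname{coh}}$ into $\overline{\langle G\rangle}_{n}$ factors through an object of $\langle G\rangle_n$. Then take the homotopy pullback along the connecting map, and use the factorization of maps into $\operatorname{Add}(G)$ through finite subsums to replace $A$ by an object of $\langle G\rangle_1$. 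An octahedral-axiom argument then exhibits $E$ as a summand of an object of $\langle G\rangle_n$.

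I expect the main obstacle to be the first step, the relative compactness claim together with the uniform control of shifts. Specifically, one must check that the Hall--Rydh approximation yields perfect approximations with the required degree bounds in the stacky setting, and that the degree bookkeeping survives the induction: the intermediate objects built from $\overline{\langle G\rangle}_{n-1}$ must stay uniformly bounded below where they are hit by maps. The first thing I would try is to truncate the intermediate objects using $\tau^{\geq a}$ and absorb the errors via the vanishing coming from $\operatorname{cd}(\mathcal{X})<\infty$.
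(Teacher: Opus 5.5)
There is a genuine gap in your argument, at the first bullet of your three-way split. The vanishing $\operatorname{Hom}(E,G[i])=0$ for $i\gg 0$ is false without regularity. In the local-to-global spectral sequence only $p$ is bounded by $\operatorname{cd}(\mathcal{X})$, while $\operatorname{\mathcal{E}\! \mathit{xt}}^q(E,G)$ may be nonzero for infinitely many $q$.

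For a concrete counterexample, take $\mathcal{X}=\operatorname{Spec}(k[x]/(x^2))$, which satisfies all the hypotheses, and $E=G=k$. Then $\operatorname{Ext}^i(k,k)=k$ for every $i\geq 0$. Also $\coprod_{i\geq 0}k[i]\cong\prod_{i\geq 0}k[i]$, since there is one factor in each degree. Hence $\operatorname{Hom}(k,\coprod_{i\geq 0}k[i])=\prod_i k\neq\bigoplus_i k$. So maps from objects of $D^b_{\operatorname{coh}}(\mathcal{X})$ into $\operatorname{Add}(G)$ need not factor through finite subsums.

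Your inductive step needs exactly this factorization, for maps from $D\in\langle G\rangle_{n-1}$ into a shifted coproduct of copies of $G$. Without it, the split-subsum/octahedral lift that replaces the $\operatorname{Add}(G)$-piece by an object of $\langle G\rangle_1$ fails. The repair you suggest ($\tau^{\geq a}$-truncation plus $\operatorname{cd}$) does not address this. The offending summands sit in very negative degrees, and the obstruction is unbounded local $\operatorname{Ext}$, not cohomological dimension. In the paper's only application $\mathcal{X}$ is regular, so $E$ and $G$ are perfect and your vanishing holds there; but the lemma is stated without regularity.

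The missing idea is to treat the two ends asymmetrically and to use that $E$ is a \emph{summand}.
\begin{itemize}
\item Low shifts (very positive degrees) are killed by maps out of $E$, since $\operatorname{Hom}(D^{\leq e_1},D^{\geq e_1+1})=0$.
\item High shifts (very negative degrees) must instead be killed by maps \emph{into} $E$, since $\operatorname{Hom}(D^{\leq e_0-1},E)=0$.
\end{itemize}

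To exploit this, first show that $E$ is a summand of an object built in $n$ layers from coproducts of $G[i]$ with $i$ in a fixed bounded window. Split each $\operatorname{Add}(G)$-layer into high-, middle- and low-shift parts, with the cut-offs in successive layers staggered by more than the amplitude of $G$. Then use that a triangle $A\to Y\to B\to A[1]$ splits when $B$ lives in strictly lower degrees than $A[1]$. This lets you move every high-shift piece to the front, where it is a subobject killed by $Y\to E$. It also lets you move every low-shift piece to the back, where it is a quotient receiving zero from $E$.

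After this reduction, every coproduct appearing in the induction lies in a uniform $D^{\geq a}$. Your relative-compactness claim is correct: given perfect approximation of $D^b_{\operatorname{coh}}$ (the input the paper takes from \cite{Hall/Lamarche/Lank/Peng:2025}), it needs only $t$-structure orthogonality, not $\operatorname{cd}(\mathcal{X})$. With it, the Rouquier induction with split finite subsums goes through.

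The paper avoids all of this by quoting \cite[Lemma 2.14]{DeDeyn/Lank/ManaliRahul:2024}, after checking its hypotheses: a single compact generator, a $t$-structure in the preferred equivalence class, and $D^b_{\operatorname{coh}}\subseteq\mathcal{T}^b_c$.
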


\begin{proof}
    This follows from a string of results and concepts in the literature (see e.g.\ \cite{Neeman:2021} and the notation in loc.\ cit.). By \cite[Theorem A]{Hall/Rydh:2017}, $D_{\operatorname{qc}}(\mathcal{X})$ is singly compactly generated. Moreover, \cite[Proposition 6.6]{DeDeyn/Lank/ManaliRahul/Peng:2025} shows that the standard $t$-structure on $D_{\operatorname{qc}}(\mathcal{X})$ is in the preferred equivalence class. Since $\mathcal{X}$ is concentrated, we have $\operatorname{Perf}(\mathcal{X})=D_{\operatorname{qc}}(\mathcal{X})^c\subseteq \mathcal{T}^b_c$ where $\mathcal{T}:= D_{\operatorname{qc}}(\mathcal{X})$ and $\mathcal{T}^b_c$ is defined as in \cite{Neeman:2021}. Furthermore, \cite[Theorem A]{Hall/Lamarche/Lank/Peng:2025} implies that $D^b_{\operatorname{coh}}(\mathcal{X})\subseteq \mathcal{T}^b_c$. The claim now follows from \cite[Lemma 2.14]{DeDeyn/Lank/ManaliRahul:2024}.
\end{proof}

\begin{lemma}
    \label{lem:derived_resolution_of_the_diagonal}
    Let $R$ be a regular ring of finite Krull dimension and $\mathcal{X}$ be a Noetherian algebraic stack with quasi-finite and separated diagonal over $R$. Let $K$ be a complex in $D_{\operatorname{coh}}^b(\mathcal{X}\times_R\mathcal{X})$. Let $g_1,g_2\colon \mathcal{X}\times_R\mathcal{X}\to\mathcal{X}$ be the canonical projections. If $\mathcal{X}$ is smooth and finitely presented over $R$, then there exists a distinguished triangle in $D_{\operatorname{coh}}^b(\mathcal{X}\times_R\mathcal{X})$,
    \begin{displaymath}
        B_1\longrightarrow K_0\longrightarrow K\longrightarrow B_1[1]
    \end{displaymath}
    such that $K_0\to K$ induces surjective maps on the cohomology sheaves and
    \begin{displaymath}
        K_0=\bigoplus_{j\in J}\mathbf{L}g_1^\ast F_j\otimes^{\mathbf{L}}\mathbf{L}g_2^\ast G_j[n_j]
    \end{displaymath}
    for some finite set $J$ and coherent sheaves $F_j$ and $G_j$ on $\mathcal{X}$ and integer $n_j$ for every $j\in J$. Moreover, if $K\in D_{\operatorname{coh}}^b(\mathcal{X}\times_R\mathcal{X})^{\leq 0}$, then so is $B_1$. (Here, $D_{\operatorname{coh}}^b(\mathcal{X}\times_R\mathcal{X})^{\leq 0}$ are those complexes $E$ with bounded coherent cohomology and that satisfy $\mathcal{H}^n (E) \cong 0$ for $n>0$).
\end{lemma}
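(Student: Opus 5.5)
The plan is to build $K_0$ one cohomology sheaf at a time. At each step I find a sum of exterior products of coherent sheaves that surjects onto that cohomology sheaf, lift it to a map into $K$, and then define $B_1$ as the shifted cone of $K_0\to K$.

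Set $\mathcal{Z}:=\mathcal{X}\times_R\mathcal{X}$. Since $g_1,g_2$ are flat (base changes of the smooth structure morphism), we have
\begin{displaymath}
    \mathbf{L}g_1^\ast F\otimes^{\mathbf{L}}\mathbf{L}g_2^\ast G\cong g_1^\ast F\otimes g_2^\ast G=:F\boxtimes G,
\end{displaymath}
and this is a coherent sheaf concentrated in degree $0$.

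The first and key step is a \emph{surjection lemma}: every coherent sheaf $\mathcal{H}$ on $\mathcal{Z}$ is a quotient of a finite direct sum $\bigoplus F_j\boxtimes G_j$ with $F_j,G_j$ coherent on $\mathcal{X}$. I would argue this in three moves.
\begin{enumerate}
    \item $\mathcal{X}$ is Noetherian with quasi-finite separated diagonal, so it has the resolution/coherent approximation property of Rydh. Every quasi-coherent sheaf on $\mathcal{X}$ is then a filtered union of coherent subsheaves.
    \item Better, one can pick a finite flat, or at least affine, cover. Concretely, by Rydh / Laumon--Moret-Bailly there is a finite surjective $p\colon V\to\mathcal{X}$ from a scheme, or else a quasi-finite flat presentation can be used.
    \item The cleanest route is to show that $\operatorname{Qcoh}(\mathcal{Z})$ is generated under colimits by the sheaves $F\boxtimes G$. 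For this, take a smooth affine presentation $s\colon U\to\mathcal{X}$. Then $s\times s\colon U\times_R U\to\mathcal{Z}$ is smooth, but not affine, which is the difficulty. So instead I would use that $\mathcal{Z}$ is again Noetherian with quasi-finite separated diagonal. Pushing forward along an affine morphism does not directly give exterior products either.
\end{enumerate}
I therefore expect the main obstacle to be this generation statement, and the first thing I would try is the compact-generation argument of the following step.

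Here is the argument I would try. By \cite[Theorem A]{Hall/Rydh:2017}, $D_{\operatorname{qc}}(\mathcal{X})$ is compactly generated by perfect complexes. The standard Künneth/box-product argument then shows that $D_{\operatorname{qc}}(\mathcal{Z})$ is compactly generated by the objects $P\boxtimes Q$ with $P,Q$ perfect: if $\operatorname{Hom}(P\boxtimes Q,M)=0$ for all such $P,Q$, then adjunction together with the projection formula along the concentrated maps $g_i$ gives $M=0$. Taking truncations, and using that $\mathcal{H}$ is a coherent, hence compact, object of $\operatorname{Qcoh}(\mathcal{Z})$, a finite sum of sheaves $\mathcal{H}^{0}(P)\boxtimes\mathcal{H}^0(Q)$ suffices. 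The reason is the standard fact that the images of maps from the generators $\tau^{\geq 0}(P\boxtimes Q)$ exhaust $\mathcal{H}$; here $\mathcal{H}^0(P\boxtimes Q)$ is a quotient of a box product of the top cohomology sheaves, after shifting. So I take $F_j,G_j$ to be the top cohomology sheaves of perfect complexes, which are coherent.

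Next I construct $K_0$, and I will get the whole triangle by induction on the number of nonzero cohomology sheaves of $K$. Let $a$ be the top degree with $\mathcal{H}^a(K)\neq 0$. For each degree $i$, the surjection lemma gives $\bigoplus_j F_{ij}\boxtimes G_{ij}\twoheadrightarrow\mathcal{H}^i(K)$. I need to lift this to a map $\bigoplus_j F_{ij}\boxtimes G_{ij}[-i]\to K$. Lifting is automatic only in the top degree; for lower degrees I instead arrange the generators to be of the form $P\boxtimes Q$ truncated, choosing maps from compact objects $P\boxtimes Q[-i]\to K$ that hit $\mathcal{H}^i$. This is possible because compact generation gives surjectivity of $\bigoplus \operatorname{Hom}(C,K)\otimes C\to K$ on $\mathcal{H}^i$ for a suitable finite family, by Noetherianity. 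Since $K$ is bounded, I then replace $P\boxtimes Q$ by the sheaf $\mathcal{H}^{m}(P)\boxtimes\mathcal{H}^{m'}(Q)$, which only requires a map from a sheaf $F\boxtimes G[n]$ into $K$. If this replacement fails I would accept $K_0$ given by truncated box products, though the statement insists on coherent sheaves.

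Finally, summing over $i$ gives $K_0\to K$, which is surjective on every $\mathcal{H}^i$, and I complete it to a triangle $B_1\to K_0\to K\to B_1[1]$. For the last assertion: if $K\in D^{\leq 0}$, I choose all $n_j$ so that the summands sit in degrees $\leq 0$, so $K_0\in D^{\leq 0}$. The long exact sequence then gives $\mathcal{H}^i(B_1)=0$ for $i>1$, and $\mathcal{H}^1(B_1)=\operatorname{coker}(\mathcal{H}^0K_0\to\mathcal{H}^0K)=0$ by surjectivity. Hence $B_1\in D^{\leq 0}$, with bounded coherent cohomology.
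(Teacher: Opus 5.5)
Your proposal has a genuine gap at the step you yourself flag as the main obstacle, and the workaround you propose does not close it. The \emph{surjection lemma} (every coherent sheaf on $\mathcal{Z}=\mathcal{X}\times_R\mathcal{X}$ is a quotient of a finite sum of $g_1^\ast F\otimes g_2^\ast G$) is never actually proved. It also cannot be deduced from compact generation of $D_{\operatorname{qc}}(\mathcal{Z})$.

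Triangulated generation only says that $\operatorname{Hom}(C[n],M)\neq 0$ for some $n$ when $M\neq 0$. It does not say that the induced maps $\mathcal{H}^0(C[n])\to\mathcal{H}$ jointly surject onto a sheaf $\mathcal{H}$. The reason is that a map $C[n]\to\mathcal{H}/\mathcal{H}'$ need not lift to $\mathcal{H}$: the obstruction lies in $\operatorname{Hom}(C[n],\mathcal{H}'[1])$. Concretely, $\mathcal{O}\oplus\mathcal{O}(1)$ compactly generates $D_{\operatorname{qc}}(\mathbb{P}^1_k)$. Yet the only nonzero maps from its shifts to $\mathcal{O}(-1)$ are the classes in $H^1(\mathcal{O}(-2))$, i.e.\ maps $\mathcal{O}(1)[-1]\to\mathcal{O}(-1)$, and these are zero on $\mathcal{H}^0$. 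So the ``standard fact'' you invoke is false.

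The generation statement itself is also not a formal K\"unneth argument for stacks. Here $\mathcal{X}$ is not even assumed concentrated, so $P\otimes^{\mathbf{L}}Q$-type box products of perfect complexes need not be compact. What is required is an abelian-level statement about coherent sheaves on $\mathcal{Z}$, and the paper does not derive it: it imports it from \cite[Proposition 2.3]{Peng:2024}.

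The lifting problem for lower cohomology sheaves is likewise unresolved, as you admit. A map $P\otimes^{\mathbf{L}}Q[-i]\to K$ does not factor through a sheaf built from $\mathcal{H}^m(P)$ and $\mathcal{H}^{m'}(Q)$. The claimed surjectivity onto $\mathcal{H}^i(K)$ from compact generation is the same false assertion as above. Note also that lifting along $\mathcal{H}^i(K)[-i]$ is free only in the \emph{lowest} nonzero degree $b$, via $\tau^{\le b}K\to K$, not in the top degree.

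The missing idea is to work with an honest bounded complex of coherent sheaves representing $K$. Then surject a finite sum of box products onto the \emph{cycle} sheaves $Z^i(K)\subseteq K^i$ rather than onto $\mathcal{H}^i(K)$. Such a map is automatically a chain map $\bigoplus_j g_1^\ast F_j\otimes g_2^\ast G_j[-i]\to K$ that is surjective on $\mathcal{H}^i$, with no obstruction. This is the form in which the paper obtains its map $A_0\to K$.

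Finally, your identification $\mathbf{L}g_1^\ast F\otimes^{\mathbf{L}}\mathbf{L}g_2^\ast G\cong g_1^\ast F\otimes g_2^\ast G$ is wrong when $R$ is not a field. Flatness of the $g_i$ leaves the Tor over $R$: locally the derived product is $M\otimes^{\mathbf{L}}_R N$, so $R=\mathbb{Z}$ and $F=G=\mathcal{O}/p$ give $\mathcal{H}^{-1}\neq 0$. Only $\mathcal{H}^0$ agrees. The paper uses exactly this: the summandwise truncations $(\mathbf{L}g_1^\ast F_j\otimes^{\mathbf{L}}\mathbf{L}g_2^\ast G_j)[n_j]\to(g_1^\ast F_j\otimes g_2^\ast G_j)[n_j]$ are surjective on cohomology sheaves, so the composite $K_0\to A_0\to K$ works. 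Your final argument that $B_1\in D^{\le 0}$ is fine.
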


\begin{proof}
    By \cite[Proposition 2.3]{Peng:2024}, there exists a finite set $J$, coherent sheaves $F_j$ and $G_j$ on $\mathcal{X}$ for each $j\in J$, and a morphism of complexes
    \begin{displaymath}
        \bigoplus_{j\in J}\bigg(g_1^\ast F_j\otimes g_2^\ast G_j[n_j]\bigg)\to K
    \end{displaymath}
    that induces surjective maps on cohomology sheaves. Set $A_0=\bigoplus_{j\in J}(g_1^\ast F_j\otimes g_2^\ast G_j[n_j])$ and $K_0=\bigoplus_{j\in J}(\mathbf{L}g_1^\ast F_j\otimes^\mathbf{L}\mathbf{L}g_2^\ast G_j[n_j])$. Note that $K_0\in D_{\operatorname{coh}}^b(\mathcal{X}\times_R\mathcal{X})$ since $\mathcal{X}\times_R\mathcal{X}$ is regular. Observe that there is a morphism $K_0\to K$ that induces surjective maps on cohomology sheaves. Indeed, we have $\mathcal{H}\!^{0}(\mathbf{L}g_1^\ast F_j\otimes^\mathbf{L}\mathbf{L}g_2^\ast G_j)\simeq g_1^\ast F_j\otimes g_2^\ast G_j$ since both projections $g_1,g_2$ are flat. Therefore, we get a canonical map$$(\mathbf{L}g_1^\ast F_j\otimes^\mathbf{L}\mathbf{L}g_2^\ast G_j)[n_j]\to \mathcal{H}\!^{0}(\mathbf{L}g_1^\ast F_j\otimes^\mathbf{L}\mathbf{L}g_2^\ast G_j)[n_j]\simeq g_1^\ast F_j\otimes g_2^\ast G_j[n_j]$$whose induced maps on cohomology sheaves are surjective. Taking the direct sum yields
    \begin{displaymath}
        K_0=\bigoplus_{j\in J}\bigg(\mathbf{L}g_1^\ast F_j\otimes^\mathbf{L}\mathbf{L}g_2^\ast G_j[n_j]\bigg)\longrightarrow A_0=\bigoplus_{j\in J}\bigg(g_1^\ast F_j\otimes g_2^\ast G_j[n_j]\bigg).
    \end{displaymath}
    Then the composition $K_0\to A_0\to K$ is a morphism with the desired property. We may then form the distinguished triangle
    \begin{displaymath}
        B_1\longrightarrow K_0\longrightarrow K\longrightarrow B_1[1],
    \end{displaymath}
    and the result follows. If $K\in D_{\operatorname{coh}}^b(\mathcal{X}\times_R\mathcal{X})^{\leq 0}$, we may remove the terms in $A_0$ and $K_0$ containing negative shifts. The resulting map $K_0\to A_0\to K$ will still induce surjective morphisms on cohomology sheaves. It follows that $B_1\in D_{\operatorname{coh}}^b(\mathcal{X}\times_R\mathcal{X})^{\leq 0}$ as desired. 
\end{proof}

\begin{corollary}
    \label{cor:derived_resolution}
    In the situation of \Cref{lem:derived_resolution_of_the_diagonal}, there exists a distinguished triangle in $D_{\operatorname{coh}}^b(\mathcal{X}\times_R\mathcal{X})$,
    \begin{displaymath}
        C_n\longrightarrow K\longrightarrow B_{n+1}[n+1]\longrightarrow C_n[1]
    \end{displaymath}
    for every $n\geq 0$ such that $C_n\in\langle\oplus_i^{n} K_i \rangle_{n+1}$ where
    \begin{displaymath}
        K_i=\bigoplus_{j\in J_{i}}\bigg(\mathbf{L}g_1^\ast F_i^j\otimes^{\mathbf{L}}\mathbf{L}g_2^\ast G_i^j[n_j]\bigg)
    \end{displaymath}
    for some finite sets $J_i$ and coherent sheaves $F_i^j$ and $G_i^j$ on $\mathcal{X}$ for every $j\in J_i$ and for every $0\leq i\leq n$. If $K\in D_{\operatorname{coh}}^b(\mathcal{X}\times_R\mathcal{X})^{\leq 0}$, it can be arranged that $B_i\in D_{\operatorname{coh}}^b(\mathcal{X}\times_R\mathcal{X})^{\leq 0}$ for every $0\leq i\leq n+1$.
\end{corollary}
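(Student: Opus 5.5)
We argue by induction on $n$, iterating \Cref{lem:derived_resolution_of_the_diagonal}; this is the standard ``ghost/resolution'' construction. Set $B_0 := K$. Given $B_i \in D^b_{\operatorname{coh}}(\mathcal{X}\times_R\mathcal{X})$, apply \Cref{lem:derived_resolution_of_the_diagonal} to $B_i$. This produces a distinguished triangle
\begin{displaymath}
    B_{i+1}\longrightarrow K_i\longrightarrow B_i\longrightarrow B_{i+1}[1],
\end{displaymath}
where $K_i$ is a finite sum of objects $\mathbf{L}g_1^\ast F_i^j\otimes^{\mathbf{L}}\mathbf{L}g_2^\ast G_i^j[n_j]$. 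If $B_i$ lies in $D^{\leq 0}$, the lemma's last clause gives $B_{i+1}\in D^{\leq 0}$ as well. Starting from $K\in D^{\leq 0}$, induction then puts every $B_i$ in $D^{\leq 0}$, which is the final assertion.

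Next we build the triangles $C_n\to K\to B_{n+1}[n+1]$ inductively. For $n=0$, rotate the triangle for $i=0$ to get $K_0\to K\to B_1[1]$, and put $C_0:=K_0\in\langle K_0\rangle_1$. Now suppose $C_{n-1}\to K\xrightarrow{\phi_n} B_n[n]$ has been constructed. Shifting the triangle for $i=n$ by $n$ gives $K_n[n]\to B_n[n]\xrightarrow{\psi} B_{n+1}[n+1]$. Apply the octahedral axiom to the composite $\psi\circ\phi_n\colon K\to B_{n+1}[n+1]$, and let $C_n$ be the fibre of this composite. The octahedron yields a distinguished triangle
\begin{displaymath}
    C_{n-1}\longrightarrow C_n\longrightarrow K_n[n]\longrightarrow C_{n-1}[1],
\end{displaymath}
since the fibre of $\phi_n$ is $C_{n-1}$ and the fibre of $\psi$ is $K_n[n]$.

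Then $C_n$ is an extension of $K_n[n]\in\langle K_n\rangle_1$ by $C_{n-1}$. By induction $C_{n-1}\in\langle\oplus_{i\le n-1}K_i\rangle_{n}$. Enlarging the generator to $\oplus_{i\le n}K_i$, both pieces lie in the appropriate levels, and the usual property $\langle\mathcal{S}\rangle_a\star\langle\mathcal{S}\rangle_b\subseteq\langle\mathcal{S}\rangle_{a+b}$ gives $C_n\in\langle\oplus_{i\le n}K_i\rangle_{n+1}$. This requires that extensions of level-$n$ objects by level-$1$ objects land in level $n+1$. That follows from the definition of $\langle-\rangle_n$ as cones of maps $\langle\mathcal{S}\rangle_{n-1}\to\langle\mathcal{S}\rangle_1$ after rotating the triangle, or equivalently from the standard associativity of $\star$ in \cite{Bondal/VandenBergh:2003}.

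The main point to get right is the octahedral bookkeeping: the fibre of the composite must really be an extension of $C_{n-1}$ and $K_n[n]$ in the orientation needed for the level count. Everything else is formal. All objects remain in $D^b_{\operatorname{coh}}$ because the lemma produces $K_i$ there, since $\mathcal{X}\times_R\mathcal{X}$ is regular.
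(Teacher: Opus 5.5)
Your proposal is correct and matches the paper's proof essentially step for step. You iterate \Cref{lem:derived_resolution_of_the_diagonal} from $B_0=K$, take $C_n$ to be the fibre of the composite $K\to B_1[1]\to\cdots\to B_{n+1}[n+1]$, and use the octahedral axiom to get $C_{n-1}\to C_n\to K_n[n]\to C_{n-1}[1]$, which is a rotation of the paper's $K_{k+1}[k]\to C_k\to C_{k+1}\to K_{k+1}[k+1]$. One small quibble: rotating that triangle alone does not exhibit $C_n$ as the cone of a map $\langle\mathcal{S}\rangle_{n}\to\langle\mathcal{S}\rangle_1$, which is the form the paper's definition requires, so it is your second justification (associativity of $\star$) that actually gives $C_n\in\langle\oplus_{i\le n}K_i\rangle_{n+1}$.
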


\begin{proof}
    We proceed by induction $n$. If $n=0$, we have a distinguished triangle
    \begin{displaymath}
        B_1\longrightarrow K_0\longrightarrow K=B_0\longrightarrow B_1[1]
    \end{displaymath}
    by \Cref{lem:derived_resolution_of_the_diagonal}. Moreover, if $K=B_0\in D_{\operatorname{coh}}^b(\mathcal{X}\times_R\mathcal{X})^{\leq 0}$, then it can be arranged that $B_1$ is so. In this case, we have $C_0\simeq K_0$, and the statement is trivially true. Suppose there is a distinguished triangle
    \begin{displaymath}
        C_k\longrightarrow K\longrightarrow B_{k+1}[k+1]\longrightarrow C_k[1]
    \end{displaymath}
    for some $k$ such that $C_k\in\langle\oplus_i^{k} K_i \rangle_{k+1}$ where the $K_i$'s are of the desired form. By \Cref{lem:derived_resolution_of_the_diagonal}, we have a distinguished triangle
    \begin{displaymath}
        B_{k+2}\longrightarrow K_{k+1}\longrightarrow B_{k+1}\longrightarrow B_{k+2}[1]
    \end{displaymath}
    where $K_{k+1}$ is of the desired form. Moreover, if $B_{k+1}\in D_{\operatorname{coh}}^b(\mathcal{X}\times_R\mathcal{X})^{\leq 0}$, it can be arranged that $B_{k+2}$ is so. Let $C_{k+1}$ be the cocone of the composition
    \begin{displaymath}
        K\to B_1[1]\to B_2[2]\to\ldots\to B_{k+2}[k+2].
    \end{displaymath}
    By the octahedron axiom, there is a distinguished triangle
    \begin{displaymath}
        K_{k+1}[k]\longrightarrow C_{k}\longrightarrow C_{k+1}\longrightarrow K_{k+1}[k+1].
    \end{displaymath}
    Since $C_k\in\langle\oplus_i^{k+1} K_i\rangle_{k+1}$ and $K_{k+1}[k+1]\in\langle\oplus_i^{k+1} K_i\rangle_{1}$, we have $C_{k+1}\in\langle\oplus_i^{k+1} K_i\rangle_{k+2}$. By induction, the statement follows.
\end{proof}

\begin{proof}
    [Proof of \Cref{thm:upper_bound}]
    Note that $\mathcal{Y}$ has finite Krull dimension (see e.g.\ \cite[Lemma 5.1]{DeDeyn/Lank/ManaliRahul/Peng:2025}). Therefore, the desired upper bound is finite. Moreover, $\mathcal{Y}$ is a regular Noetherian algebraic stack because $f$ is of finite presentation with a regular Noetherian target. Set $D=\dim(f)+\operatorname{cd}(\mathcal{Y})$. Consider the fibred square
    \begin{displaymath}
        \begin{tikzcd}
            {\mathcal{Y}\times_{\operatorname{Spec}(R)} \mathcal{Y}} & {\mathcal{Y}} \\
            {\mathcal{Y}} & {\operatorname{Spec}(R).}
            \arrow["{g_2}", from=1-1, to=1-2]
            \arrow["{g_1}"', from=1-1, to=2-1]
            \arrow["f", from=1-2, to=2-2]
            \arrow["f"', from=2-1, to=2-2]
        \end{tikzcd}
    \end{displaymath}
    By base change, $g_1$ is smooth and finitely presented, and so $\mathcal{Y}\times_{\operatorname{Spec}(R)} \mathcal{Y}$ is a regular Noetherian algebraic stack. Hence, \cite[Proposition 3.6]{DeDeyn/Lank/ManaliRahul/Peng:2025} ensures that $\operatorname{Perf}(\#) = D^b_{\operatorname{coh}}(\#)$ for $\# =\mathcal{Y}$ and $\mathcal{Y}\times_{\operatorname{Spec}(R)} \mathcal{Y}$.

    Let $\Delta$ be the diagonal of $f$, which is finite by assumption. Therefore, we have $\Delta_\ast \mathcal{O}_{\mathcal{Y}}\cong \mathbf{R}\Delta_\ast \mathcal{O}_{\mathcal{Y}}\in \operatorname{Perf}(\mathcal{Y}\times_{\operatorname{Spec}(R)} \mathcal{Y})$. By \Cref{cor:derived_resolution}, we have the following distinguished triangle
    \begin{displaymath}
        C_D\longrightarrow \mathbf{R}\Delta_\ast \mathcal{O_Y}\longrightarrow B_{D+1}[D+1]\longrightarrow C_D[1]
    \end{displaymath}
    such that $C_D\in\langle\oplus_i^{D} K_i \rangle_{D+1}$ where each $K_i$ is of the form $\oplus_{j\in J_{i}}(\mathbf{L}g_1^\ast F_i^j\otimes^{\mathbf{L}}\mathbf{L}g_2^\ast G_i^j[n_i^j])$ for some finite sets $J_i$ and coherent sheaves $F_i^j$ and $G_i^j$ on $\mathcal{Y}$ for every $j\in J_i$ and for every $0\leq i \leq D$. Moreover, it can be arranged that $B_{D+1}\in D_{\operatorname{coh}}^b(\mathcal{Y}\times_R\mathcal{Y})^{\leq 0}$. It follows from \Cref{lem:exi^D_vanishing_regular_finite_Krull_dimension} that $\operatorname{Hom}(\mathbf{R}\Delta_\ast \mathcal{O_Y},B_{D+1}[D+1])=0$. This implies that $\mathbf{R}\Delta_\ast \mathcal{O_Y}$ is a direct summand of $C_D$ and thus contained in $\langle\oplus_i^{D} K_i\rangle_{D+1}$. Define $G_1=\oplus_{i,j}F_i^j$ and $G_2=\oplus_{i,j}G_i^j[n_i^j]$. Observe that $K_i$ is a direct summand of $\mathbf{L}g_1^\ast G_1\otimes^{\mathbf{L}}\mathbf{L}g_2^\ast G_2$ for every $0\leq i\leq D$. It follows that $\mathbf{R}\Delta_\ast \mathcal{O_Y}\in\langle\mathbf{L}g_1^\ast G_1\otimes^{\mathbf{L}}\mathbf{L}g_2^\ast G_2\rangle_{D+1}$ and thus $\dim_{\Delta} (f)\leq\dim (f)+\operatorname{cd}(\mathcal{Y})$.
    
    We now bound the Rouquier dimension of $D_{\operatorname{coh}}^b(\mathcal{Y})$. Since $\langle-\rangle$ is closed under shifts, we may suppress the shifts in our calculation below. For each $E\in \operatorname{Perf}(\mathcal{Y})$, flat base change and the projection formula imply
    \begin{displaymath}
        \begin{aligned}
            E
            &\cong \mathbf{R}(g_2)_\ast (\mathbf{L}g_1^\ast E \otimes^{\mathbf{L}} \mathbf{R}\Delta_\ast \mathcal{O}_{\mathcal{Y}}) 
            \\&\in \langle \mathbf{R}(g_2)_\ast(\mathbf{L}g_1^\ast E \otimes^{\mathbf{L}} (\bigoplus^D_{i=0} \bigoplus_{j\in J_{i}}(\mathbf{L}g_1^\ast F_i^j\otimes^{\mathbf{L}}\mathbf{L}g_2^\ast G_i^j))) \rangle_{D+1}
            \\&\subseteq \langle \bigoplus^D_{i=0} \bigoplus_{j\in J_{i}} \mathbf{R}(g_2)_\ast \mathbf{L}g_1^\ast (E\otimes^{\mathbf{L}} F^j_i ) \otimes^{\mathbf{L}} G^j_i \rangle_{D+1}
            \\&\subseteq \langle \bigoplus^D_{i=0} \bigoplus_{j\in J_{i}} \mathbf{L}f^\ast \mathbf{R}f_\ast (E\otimes^{\mathbf{L}} F^j_i ) \otimes^{\mathbf{L}} G^j_i \rangle_{D+1}.
        \end{aligned}
    \end{displaymath}
    Since $R$ is regular of finite Krull dimension, it follows that $\overline{\langle \mathcal{O}_{\operatorname{Spec}(R)} \rangle}_{\dim R +1}= D_{\operatorname{qc}}(\operatorname{Spec}(R))$ (see \Cref{ex:regular_ring}). Hence, 
    \begin{displaymath}
        \begin{aligned}
            E 
            &\in \langle \bigoplus^D_{i=0} \bigoplus_{j\in J_{i}} \mathbf{L}f^\ast \mathbf{R}f_\ast (E\otimes^{\mathbf{L}} F^j_i )  \otimes^{\mathbf{L}} G^j_i \rangle_{D+1}
            \\&\subseteq \overline{ \langle \bigoplus^D_{i=0} \bigoplus_{j\in J_{i}} \mathbf{L}f^\ast \mathcal{O}_{\operatorname{Spec}(R)} \otimes^{\mathbf{L}} G^j_i  \rangle}_{(D+1)(\dim R + 1)}
            \\&\subseteq \overline{\langle \bigoplus^D_{i=0} \bigoplus_{j\in J_{i}}  G^j_i \rangle}_{(D+1)(\dim R + 1)}.
        \end{aligned}
    \end{displaymath}
    By \Cref{lem:big_to_small_generation}, we obtain that $E\in \langle \oplus^D_{i=0} \oplus_{j\in J_{i}}  G^j_i \rangle_{(D+1)(\dim R + 1)}$. Since $\oplus^D_{i=0} \oplus_{j\in J_{i}}  G^j_i$ is independent of $E$, the result follows.
\end{proof}

\begin{proof}
    [Proof of \Cref{prop:ELS_fibre_product}]
    Consider the following diagram
    \begin{displaymath}
        \begin{tikzcd}
            {\mathcal{Y}^\prime_1 \times_{\mathcal{S}} \mathcal{Y}^\prime_2} & {\mathcal{Y}_1 \times_{\mathcal{S}} \mathcal{Y}^\prime_2} & {\mathcal{Y}^\prime_2} \\
            {\mathcal{Y}^\prime_1 \times_{\mathcal{S}} \mathcal{Y}_2} & {\mathcal{Y}_1 \times_{\mathcal{S}} \mathcal{Y}_2} & {\mathcal{Y}_2} \\
            {\mathcal{Y}^\prime_1} & {\mathcal{Y}_1} & {\mathcal{S}}
            \arrow["{h_1^{\prime \prime}}", from=1-1, to=1-2]
            \arrow["{h_2^{\prime \prime}}"', from=1-1, to=2-1]
            \arrow["{g^\prime_2}", from=1-2, to=1-3]
            \arrow["{h_2^\prime}", from=1-2, to=2-2]
            \arrow["{h_2}", from=1-3, to=2-3]
            \arrow["{h_1^\prime}", from=2-1, to=2-2]
            \arrow["{g_1^\prime}"', from=2-1, to=3-1]
            \arrow["{g_2}", from=2-2, to=2-3]
            \arrow["{g_1}"', from=2-2, to=3-2]
            \arrow["{f_2}", from=2-3, to=3-3]
            \arrow["{h_1}"', from=3-1, to=3-2]
            \arrow["{f_1}"', from=3-2, to=3-3]
        \end{tikzcd}
    \end{displaymath}
    whose faces are fibred squares. Since $\mathcal{S}$ is Noetherian, every stack in the diagram above is Noetherian. Before proceeding, we record several properties of the diagram, many of which follow by base change:
    \begin{itemize}
        \item By \cite[Lemma 2.3]{Hall/Lamarche/Lank/Peng:2025}, $h_1$ and $h_2$ are concentrated because each $f_i\circ h_i$ is such. So, each $h^\prime_i$, $h^{\prime \prime}_i$, $g_j \circ h^\prime_i$, and $g_i^\prime \circ h^{\prime \prime}_j$ are concentrated.  As each $f_i$ is flat and concentrated, base change implies each $g_j$ and $g^\prime_j$ is flat and concentrated.
        \item In the diagram above, each morphism is surjective (see e.g.\ \cite[\href{https://stacks.math.columbia.edu/tag/04XH}{Tag 04XH}]{StacksProject}), whereas all are flat except perhaps the $h_i, h_i^\prime, h_i^{\prime \prime}$. Furthermore, every algebraic stack above is Noetherian because each morphism is of finite presentation to a Noetherian algebraic stack.
        \item Each $h_i^\prime$ and $h_i^{\prime \prime}$ are proper because the $h_i$ are proper.
        \item Since $f_i\circ h_i$ is smooth, so is $g_j^\prime\circ h_i^{\prime \prime}$ when $i\not=j$, and so, $\mathcal{Y}^\prime_j \times_{\mathcal{S}} \mathcal{Y}^\prime_i$ is smooth over $\mathcal{S}$. 
        \item $\mathcal{Y}^\prime_1 \times_{\mathcal{S}} \mathcal{Y}^\prime_2$ and each $\mathcal{Y}^\prime_i$ are regular and Noetherian; so, \cite[Proposition 3.6]{DeDeyn/Lank/ManaliRahul/Peng:2025} implies $\operatorname{Perf}=D^b_{\operatorname{coh}}$ in such cases. 
        \item $\mathcal{Y}^\prime_1 \times_{\mathcal{S}} \mathcal{Y}^\prime_2$ and each $\mathcal{Y}^\prime_i$ have quasi-finite diagonal.
        To see, use the fact that each $f_i\circ h_i$ is quasi-DM, and so, their base changes along one another are too (i.e.\ $g^\prime_i \circ h^{\prime \prime}_j$ are quasi-DM). Hence, these algebraic stacks are quasi-DM over a quasi-DM algebraic stack, which ensures quasi-finite diagonal (see e.g.\ \cite[\href{https://stacks.math.columbia.edu/tag/050L}{Tag 050L}]{StacksProject}). 
    \end{itemize}

    Now, we prove the desired claim. In each case of $\mathcal{Y}^\prime_1 \times_{\mathcal{S}} \mathcal{Y}^\prime_2$ and each $\mathcal{Y}^\prime_i$, concentratedness ensures that $\operatorname{Perf}$ coincides with the compacts of $D_{\operatorname{qc}}$. Moreover, by \cite[Corollary 1.2]{Lank:2026}, $D_{\operatorname{qc}}$ is singly compactly generated for $\mathcal{Y}^\prime_1 \times_{\mathcal{S}} \mathcal{Y}^\prime_2$ and each $\mathcal{Y}^\prime_i$. So, $\operatorname{Perf}$ admits a classical generator for $\mathcal{Y}^\prime_1 \times_{\mathcal{S}} \mathcal{Y}^\prime_2$ and each $\mathcal{Y}^\prime_i$ (see e.g.\ \cite[\href{https://stacks.math.columbia.edu/tag/09SR}{Tag 09SR}]{StacksProject}). Let $P_i$ be classical generators for each $\operatorname{Perf}(\mathcal{Y}^\prime_i)$. From \cite[Corollary 5.10]{Neeman:2023} and \cite[Corollary 6.3]{Hall/Lamarche/Lank/Peng:2025}, it follows that 
    \begin{itemize}
        \item $\langle \mathbf{L}(g_1^\prime \circ h^{\prime \prime}_2)^\ast P_1 \otimes^{\mathbf{L}} \mathbf{L}(g_2^\prime \circ h^{\prime \prime}_1)^\ast P_2 \rangle = \operatorname{Perf}(\mathcal{Y}^\prime_1 \times_{\mathcal{S}} \mathcal{Y}^\prime_2)$
        \item $\langle \mathbf{R}(h^\prime_1 \circ h^{\prime \prime}_2)_\ast (\mathbf{L}(g_1^\prime \circ h^{\prime \prime}_2)^\ast P_1 \otimes^{\mathbf{L}} \mathbf{L}(g_2^\prime \circ h^{\prime \prime}_1)^\ast P_2) \rangle = D^b_{\operatorname{coh}}(\mathcal{Y}_1 \times_{\mathcal{S}} \mathcal{Y}_2)$
        \item $\langle \mathbf{R}(h_i)_\ast P_i \rangle = D^b_{\operatorname{coh}}(\mathcal{Y}_i)$ for each $i$.
    \end{itemize}
    Note that we use properness of the $h^\prime_i$ and $h^{\prime \prime}_i$ to apply \cite{Hall/Lamarche/Lank/Peng:2025}. Moreover, we use the separated diagonal assumption on $\mathcal{S}$ to apply \cite[Corollary 5.10]{Neeman:2023}. Indeed, the diagonal is quasi-finite and separated and thus quasi-affine by Zariski's main theorem.

    It suffices to show that
    \begin{displaymath}
        \langle \mathbf{L}g_1^\ast \mathbf{R}(h_1)_\ast P_1 \otimes^{\mathbf{L}} \mathbf{L}g_2^\ast \mathbf{R}(h_2)_\ast P_2  \rangle = D^b_{\operatorname{coh}}(\mathcal{Y}_1 \times_{\mathcal{S}} \mathcal{Y}_2).
    \end{displaymath}
    Indeed, given any other choice of classical generators $G_i$ for $D^b_{\operatorname{coh}}(\mathcal{Y}_i)$, we have $\langle \mathbf{L}g_i^\ast G_i \rangle = \langle \mathbf{L}g_i^\ast \mathbf{R}(h_i)_\ast P_i \rangle$ implies for $i\not=j$,
    \begin{displaymath}
        \langle \mathbf{L}g_i^\ast G_i \otimes^{\mathbf{L}} \mathbf{L}g_j^\ast G_j \rangle
        =\langle \mathbf{L}g_i^\ast G_i \otimes^{\mathbf{L}} \mathbf{L}g_j^\ast \mathbf{R}(h_j)_\ast P_j \rangle 
        = \langle \mathbf{L}g_i^\ast \mathbf{R}(h_i)_\ast P_i \otimes^{\mathbf{L}} \mathbf{L}g_j^\ast \mathbf{R}(h_j)_\ast P_j \rangle
    \end{displaymath}
    This can be seen using the fact $(-)\otimes^{\mathbf{L}} E$ is exact for $E\in \{ \mathbf{L}g_j^\ast \mathbf{R}(h_j)_\ast P_j, \mathbf{L}g_j^\ast G_j\}$.
    
    Towards proving the desired claim, there is a string of isomorphisms using flat base change and the projection formula
    \begin{displaymath}
        \begin{aligned}
            \mathbf{R} & (h^\prime_1 \circ h^{\prime \prime}_2)_\ast (\mathbf{L}(g_1^\prime \circ h^{\prime \prime}_2)^\ast P_1 \otimes^{\mathbf{L}} \mathbf{L}(g_2^\prime \circ h^{\prime \prime}_1)^\ast P_2) 
            \\&\cong \mathbf{R}(h^\prime_1)_\ast \mathbf{R}(h^{\prime \prime}_2)_\ast (\mathbf{L}(h^{\prime \prime}_2)^\ast \mathbf{L}(g_1^\prime)^\ast P_1 \otimes^{\mathbf{L}} \mathbf{L}(h^{\prime \prime}_1)^\ast \mathbf{L}(g_2^\prime)^\ast P_2)
            \\&\cong \mathbf{R}(h^\prime_1)_\ast (\mathbf{R}(h^{\prime \prime}_2)_\ast \mathbf{L}(h^{\prime \prime}_1)^\ast \mathbf{L}(g_2^\prime)^\ast P_2 \otimes^{\mathbf{L}} \mathbf{L}(g_1^\prime)^\ast P_1)
            \\&\cong \mathbf{R}(h^\prime_1)_\ast (\mathbf{L}(h^\prime_1)^\ast  \mathbf{L}g_2^\ast \mathbf{R}(h_2)_\ast P_2 \otimes^{\mathbf{L}} \mathbf{L}(g_1^\prime)^\ast P_1)
            \\&\cong \mathbf{L}g_1^\ast \mathbf{R}(h_1)_\ast P_1 \otimes^{\mathbf{L}} \mathbf{L}g_2^\ast \mathbf{R}(h_2)_\ast P_2.
        \end{aligned}
    \end{displaymath}
    However, we noted earlier that 
    \begin{displaymath}
        \langle \mathbf{R}(h^\prime_1 \circ h^{\prime \prime}_2)_\ast (\mathbf{L}(g_1^\prime \circ h^{\prime \prime}_2)^\ast P_1 \otimes^{\mathbf{L}} \mathbf{L}(g_2^\prime \circ h^{\prime \prime}_1)^\ast P_2) \rangle = D^b_{\operatorname{coh}}(\mathcal{Y}_1 \times_{\mathcal{S}} \mathcal{Y}_2).
    \end{displaymath}
    This completes the proof.
\end{proof}

\begin{proof}
    [Proof of \Cref{thm:compare_diagonal_morphism}]
    There is nothing to show if $\dim_{\Delta}(f_2)=\infty$. Thus, assume that $\dim_{\Delta}(f_2) < \infty$. Consider the commutative diagram of fibred squares,
    \begin{displaymath}
        \begin{tikzcd}
            {\mathcal{Y}_2 \times_{\mathcal{S}} \mathcal{Y}_2} & {\mathcal{Y}_1 \times_{\mathcal{S}} \mathcal{Y}_2} & {\mathcal{Y}_2} \\
            {\mathcal{Y}_2 \times_{\mathcal{S}} \mathcal{Y}_1} & {\mathcal{Y}_1 \times_{\mathcal{S}} \mathcal{Y}_1} & {\mathcal{Y}_1} \\
            {\mathcal{Y}_2} & {\mathcal{Y}_1} & {\mathcal{S}.}
            \arrow["{h_1^{\prime \prime}}", from=1-1, to=1-2]
            \arrow["{h_2^{\prime \prime}}"', from=1-1, to=2-1]
            \arrow["{g^\prime_2}", from=1-2, to=1-3]
            \arrow["{h_2^\prime}", from=1-2, to=2-2]
            \arrow["f", from=1-3, to=2-3]
            \arrow["{h_1^\prime}", from=2-1, to=2-2]
            \arrow["{g_1^\prime}"', from=2-1, to=3-1]
            \arrow["{g_2}", from=2-2, to=2-3]
            \arrow["{g_1}"', from=2-2, to=3-2]
            \arrow["{f_1}", from=2-3, to=3-3]
            \arrow["f"', from=3-1, to=3-2]
            \arrow["{f_1}"', from=3-2, to=3-3]
        \end{tikzcd}
    \end{displaymath}
    By base change and \cite[Lemma 2.3]{Hall/Lamarche/Lank/Peng:2025}, we know that each $h^\prime_i$ and $h^{\prime \prime}_i$ are concentrated because $f$ is concentrated. Also, base change implies that each $g_i$ and $g^\prime_i$ is concentrated. Denote by $\Delta_i\colon \mathcal{Y}_i \to \mathcal{Y}_i\times_\mathcal{S}\mathcal{Y}_i$ the diagonal morphism. From  \Cref{lem:alt_Delta_dim}, there is a $G\in D_{\operatorname{coh}}^b(\mathcal{Y}_2)$ such that 
    \begin{displaymath}
        \mathbf{R} (\Delta_2)_\ast \mathcal{O}_{\mathcal{Y}_2} \in \langle \mathbf{L} (g^\prime_1 \circ h^{\prime \prime}_2)^\ast G \otimes^{\mathbf{L}} \mathbf{L} (g^\prime_2 \circ h^{\prime \prime}_1)^\ast G \rangle_N
    \end{displaymath}
    where $N:= \dim_\Delta (f_2)+1$. Now, using a sequence of flat base change and projection formula \cite[Corollaries 4.12 \& 4.13]{Hall/Rydh:2017}, it follows that 
    \begin{displaymath}
        \begin{aligned}
            \mathbf{L}g_1^\ast \mathbf{R} f_\ast G \otimes^{\mathbf{L}} \mathbf{L}g_2^\ast \mathbf{R} f_\ast G
            &\cong \mathbf{R} (h^\prime_1)_\ast \mathbf{L} (g^\prime_1)^\ast G \otimes^{\mathbf{L}} \mathbf{L}g_2^\ast \mathbf{R} f_\ast G
            \\&\cong \mathbf{R} (h^\prime_1)_\ast (\mathbf{L} (g^\prime_1)^\ast G \otimes^{\mathbf{L}} \mathbf{L} (h^\prime_1)^\ast  \mathbf{L}g_2^\ast \mathbf{R} f_\ast G)
            \\&\cong \mathbf{R} (h^\prime_1)_\ast (\mathbf{L} (g^\prime_1)^\ast G \otimes^{\mathbf{L}} \mathbf{R} (h^{\prime \prime}_2)_\ast \mathbf{L} (h^{\prime \prime}_1)^\ast \mathbf{L} (g^\prime_2)^\ast G)
            \\&\cong \mathbf{R} (h^\prime_1)_\ast \mathbf{R} (h^{\prime \prime}_2)_\ast ( \mathbf{L} (h^{\prime \prime}_2)^\ast \mathbf{L} (g^\prime_1)^\ast G \otimes^{\mathbf{L}} \mathbf{L} (h^{\prime \prime}_1)^\ast \mathbf{L} (g^\prime_2)^\ast G)
        \end{aligned}
    \end{displaymath}
    However, the following diagram 
    \begin{displaymath}
        \begin{tikzcd}
            {\mathcal{Y}_2 \times_{\mathcal{S}} \mathcal{Y}_2} & {\mathcal{Y}_1 \times_{\mathcal{S}} \mathcal{Y}_1} \\
            {\mathcal{Y}_2} & {\mathcal{Y}_1}
            \arrow["{h^\prime_2 \circ h^{\prime \prime}_1}", from=1-1, to=1-2]
            \arrow["{\Delta_2}", from=2-1, to=1-1]
            \arrow["f"', from=2-1, to=2-2]
            \arrow["{\Delta_1}"', from=2-2, to=1-2]
        \end{tikzcd}
    \end{displaymath}
    commutes.
    Since $\mathcal{O}_{\mathcal{Y}_1}\xrightarrow{ntrl.} \mathbf{R} f_\ast \mathcal{O}_{Y_2}$ is an isomorphism, it follows that 
    \begin{displaymath}
        \begin{aligned}
            \mathbf{R} (\Delta_1)_\ast \mathcal{O}_{\mathcal{Y}_1} 
            &\cong \mathbf{R} (\Delta_1)_\ast \mathbf{R}f_\ast \mathcal{O}_{\mathcal{Y}_2}
            \\&\cong \mathbf{R} (h^\prime_2 \circ h^{\prime \prime}_1 \circ \Delta_{\mathcal{Y}_2})_\ast \mathcal{O}_{\mathcal{Y}_2}
            \\&\in \mathbf{R} (h^\prime_2 \circ h^{\prime \prime}_1)_\ast \langle \mathbf{L} (g^\prime_1 \circ h^{\prime \prime}_2)^\ast G \otimes^{\mathbf{L}} \mathbf{L} (g^\prime_2 \circ h^{\prime \prime}_1)^\ast G \rangle_N
            \\&\subseteq \langle \mathbf{R} (h^\prime_2 \circ h^{\prime \prime}_1)_\ast \mathbf{L} (g^\prime_1 \circ h^{\prime \prime}_2)^\ast G \otimes^{\mathbf{L}} \mathbf{L} (g^\prime_2 \circ h^{\prime \prime}_1)^\ast G \rangle_N
            \\&\subseteq \langle \mathbf{L}g_1^\ast \mathbf{R} f_\ast G \otimes^{\mathbf{L}} \mathbf{L}g_2^\ast \mathbf{R} f_\ast G \rangle_N.
        \end{aligned}
    \end{displaymath}
    Since $\mathbf{R}f_\ast$ preserves complexes with bounded and coherent cohomology, we see that $\mathbf{R}f_\ast G\in D_{\operatorname{coh}}^b(\mathcal{Y}_1)$. Thus, we obtain that $\dim_{\Delta}(f_1) \leq N-1 = \dim_{\Delta}(f_2)$. 
\end{proof}

\begin{reminder}
    \label{rmk:quotient_singularities}
    Let $k$ be a field. We follow \cite[\S 5]{Satriano:2012a}, \cite[\S 1.2]{Satriano:2012b}, and \cite[\S 6]{Bresciani/Vistoli:2024}. A finite type $k$-scheme $X$ has at worst \textbf{quotient singularities} if there is an \'{e}tale cover $\{ U_i/G_i \to X \}$ where $U_i$ is $k$-smooth and the $G_i$ are finite group schemes over $k$. We say $X$ has \textbf{tame quotient singularities} if each $G_i$ has order coprime to the characteristic of $k$; these are sometimes called \textit{good quotient singularities}. Lastly, we say $X$ has \textbf{linearly reductive singularities} if each $G_i$ is a finite linearly reductive group $k$-scheme. It is known that tame quotient singularities are linearly reductive singularities. 
\end{reminder}

\begin{example}
    \label{ex:char_zero_non_toric}
    Consider a finite subgroup $G\subseteq \operatorname{SL}(2,\mathbb{C})$ with action on $\mathbb{A}_{\mathbb{C}}^2$. Let $X=\mathbb{A}_{\mathbb{C}}^2/G$ be the quotient variety. Note that $X$ has quotient, and hence rational, singularities (see e.g.\ \cite[Corollary 7.4.10]{Ishii:2018}). Now, by \cite[Theorem 7.4.17]{Ishii:2018}, $X$ is toric if, and only if, $G$ is cyclic. Thus, for $G$ not cyclic, we have many examples of surfaces which have rational singularities and are not toric (e.g.\ see \cite{Nguyen/vanderPut/Top:2008}, \cite[Satz 2.9]{Brieskorn:1968}, \cite[p.\ 38]{Riemenschneider:1977}, and \cite[\S 10.1]{Coxeter:1991} for treatments of the classification of finite subgroups of $\operatorname{GL}(2,\mathbb{C})$; also, see \cite[Theorem 7.4.19]{Ishii:2018} or \cite[Satz 2.10]{Brieskorn:1968} for explicit nontoric cases).
\end{example}

\begin{proof}
    [Proof of \Cref{cor:compare_diagonal_morphism}]
    \hfill
    \begin{enumerate}
        \item Let $f\colon X \to \operatorname{Spec}(k)$ be a morphism from a separated integral scheme of finite type to a field of characteristic zero. Suppose $X$ has rational singularities. Let $g\colon \widetilde{X}\to X$ be a proper birational morphism from a smooth $k$-scheme (use e.g.\ \cite[Theorem 1.1]{Temkin:2008}). By \Cref{thm:upper_bound}, we know that$$\dim_{\Delta} (f\circ g)\leq \dim(f\circ g)+\operatorname{cd}(\widetilde{X})\leq 2\dim X.$$Since $g$ is proper and birational and $X$ has rational singularities, we see that $\mathcal{O}_X\cong\mathbf{R}g_\ast \mathcal{O}_{\widetilde{X}}$ and $\mathbf{R}g_\ast $ preserves bounded coherent complexes. Hence, it follows from \Cref{thm:compare_diagonal_morphism} that$$\dim_{\Delta}(f) \leq \dim_{\Delta}(f\circ g)\leq 2\dim X.$$ 
        \item Let $f\colon X\to \operatorname{Spec}(k)$ be an irreducible separated scheme of finite type over a perfect field $k$. Suppose that $X$ has at worst linearly reductive singularities. By \cite[Theorem 1.10]{Satriano:2012a}, there exists a $k$-smooth separated tame Artin stack $\mathcal{X}$ whose coarse moduli space is $\pi\colon \mathcal{X}\to X$. Since $\mathcal{X}$ is tame and separated, we see that $\operatorname{cd}(\mathcal{X})\leq\dim(X)$ by \Cref{ex:cohomological_dimension_good_moduli_space} and $\mathcal{X}$ has finite diagonal. By the Keel--Mori theorem \cite[Theorem 6.12]{Rydh:2013}, the map $\pi$ is a proper universal homeomorphism and $\mathcal{O}_X\cong\pi_\ast \mathcal{O}_{\mathcal{X}}$. It follows that $\dim(\mathcal{X})=\dim(X)$. By \Cref{thm:upper_bound}, we know that$$\dim_{\Delta} (f\circ\pi)\leq \dim(f\circ\pi)+\operatorname{cd}(\mathcal{X})\leq 2\dim(\mathcal{X})=2\dim X.$$Since $\mathcal{X}$ is tame, $\pi_\ast $ is exact. Since $\pi$ is proper, we see that $R\pi_\ast $ preserves bounded coherent complexes. Hence, it follows from \Cref{thm:compare_diagonal_morphism} that$$\dim_{\Delta}(f) \leq \dim_{\Delta}(f\circ \pi)\leq 2\dim X.$$
    \end{enumerate}
\end{proof}

\bibliographystyle{alpha}
\bibliography{mainbib}

\end{document}